\newtheorem{satz}{Theorem}
\newtheorem{proposition}[satz]{Proposition}
\newtheorem{theorem}[satz]{Theorem}
\newtheorem{lemma}[satz]{Lemma}
\newtheorem{definition}[satz]{Definition}
\newtheorem{corollary}[satz]{Corollary}
\newtheorem{remark}[satz]{Remark}
\newtheorem{example}[satz]{Example}
\def\eps{\varepsilon}
\def\_phi{\varphi}
\def\a{\alpha}
\def\d{\delta}
\def\la{\lambda}
\def\F{{\mathbb F}}
\def\t{\tilde}
\def\C{{\mathbb C}}
\def\R{{\mathbb R}}
\def\E{\mathsf {E}}
\def\T{{\mathbb T}}
\def\Z_N{{\mathbb Z}_N}
\def\Z{{\mathbb Z}}
\def\N{{\mathbb N}}
\def\f{{\mathbb F}}
\def\Gr{{\mathbf G}}
\def\D{{\mathbb D}}
\def\G{\Gamma}
\def\D{\Delta}
\def\T{\mathsf {T}}
\begin{document}

\begin{frontmatter}[classification=text]

\title{Difference Sets are Not Multiplicatively Closed} 

\author[ilya]{Shkredov, I. D.}

\begin{abstract}
We prove that for any finite set $A\subset \R$ its difference set $D:=A-A$ has large product set and quotient set: that is, $|DD|, |D/D| \gg |D|^{1+c}$, where $c>0$ is an absolute constant.
    A similar result takes place in the prime field
    $\F_p$ for sufficiently small $D$.
    It gives, in particular, that
    multiplicative subgroups of size less than $p^{4/5-\eps}$
    cannot be represented in the form $A-A$ for any $A\subset \F_p$.
\end{abstract}
\end{frontmatter}

\section{Introduction}
\label{sec:introduction}

Let  $A,B\subset \R$ be finite sets.
Define the  \textit{sum set}, the \textit{difference set},
the \textit{product set} and the \textit{quotient set} of $A$ and $B$ to be
$$A+B:=\{a+b ~:~ a\in{A},\,b\in{B}\}\,, \quad \quad \quad  A-B:=\{a-b ~:~ a\in{A},\,b\in{B}\}$$
$$AB:=\{ab ~:~ a\in{A},\,b\in{B}\}\,,\quad  \quad \quad A/B:=\{a/b ~:~ a\in{A},\,b\in{B},\,b\neq0\}\,,$$
respectively.
The Erd\H{o}s--Szemer\'{e}di  sum--product conjecture \cite{ES} says that for any  $\epsilon>0$ one has
$$\max{\{|A+A|,|AA|\}}\gg{|A|^{2-\epsilon}} \,.$$
Thus, it asserts that for an arbitrary subset of real numbers (or integers) either the sumset or the product set of this set is large.
Modern bounds concerning the conjecture can be found in \cite{soly}, \cite{KS_smd}, \cite{KS2}.

We consider the following sum--product type question.

{\bf Problem.}
{\it Let $A,P \subset \R$ be two finite sets, $P \subseteq A-A$.
Suppose that $|PP| \le |P|^{1+\eps}$, where $\eps >0$ is a small parameter.
In other words,  $P$ has small product set.
Is it true that there exists $\d = \d(\eps) > 0$ with
\begin{equation}\label{f:problem_A_P}
    \sum_{x\in P} |\{ a_1-a_2 = x ~:~ a_1,a_2 \in A \}| \ll |A|^{2-\d}\ ?
\end{equation}
}

Thus, we consider a set $P$ with small product set
and we want to say something nontrivial about the additive structure of $A$, that is, the sum of additive convolutions
over $P$.
The question
plays an important role in recent papers \cite{RN_Z}, \cite{Solodkova_S} and \cite{Z_progr}, \cite{Z_sphere}.
Even the famous unit distance problem of  Erd\H{o}s (see \cite{Erdos_dist} and also the survey \cite{Sol_subspace})
can be considered as a question of that
type.
Indeed,  the
unit distance problem
is
to find a  good
upper bound for
$$
    \int_{S^{1}} |\{ a_1-a_2 = x ~:~ a_1,a_2 \in A \}| \,\, dx  \,,
$$
where $A$ is a finite subset of the Euclidean plane (which we consider as the complex plane)
and $S^{1}$ is the unit circle. Since the unit circle
$S^{1}$
is
a subgroup, it trivially has small product set: $S^{1} \cdot S^{1} = S^{1}$ .

Let us return to (\ref{f:problem_A_P}).
In \cite{RN_Z} Roche--Newton and Zhelezov
studied some
sum--product type
questions
and
obtained the following
result
(in principle, the method of their paper allows one to obtain subexponential bounds for the sum from (\ref{f:problem_A_P}) but not of the required form).
The multiplicative energy of $A$, denoted by $\E^\times (A)$, is the number of solutions to the equation
$ab = cd$, where $a,b,c,d \in A$.

\begin{theorem}
    For any $\eps>0$ there are constants $C' (\eps), C''(\eps)>0$ such that for any set $A\subset \C$
    one has
\begin{equation}\label{f:RN_E*}
    \E^\times (A-A) \le \max\{ C'' (\eps) |A|^{3+\eps},\, |A-A|^{3} \exp(-C'(\eps) \log^{1/3-o(1)} |A|) \} \,.
\end{equation}
\label{t:RN_E*}
\end{theorem}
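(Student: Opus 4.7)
The plan is to combine a multiplicative Balog--Szemer\'edi--Gowers extraction with a subpolynomial-savings sum--product estimate, and then transfer the resulting bound back through Pl\"unnecke--Ruzsa. Set $D = A-A$ and write $\E^\times(D) = |D|^3/K$; the goal is to lower-bound the parameter $K \ge 1$.

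First, apply a dyadic pigeonhole to the representation function of $DD$, followed by a standard multiplicative Balog--Szemer\'edi--Gowers argument, to extract a subset $D' \subseteq D$ of size $|D'| \gg K^{-O(1)} |D|$ whose multiplicative doubling satisfies $|D'D'| \le K^{O(1)} |D'|$. One has lost only polynomial factors in $K$ and gained the crucial property that $D'$ is an \emph{approximate multiplicative group sitting inside the difference set} $A-A$.

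Next, apply a sum--product estimate for sets of small multiplicative doubling with subpolynomial savings, in the spirit of Bourgain--Chang (whose underlying $1/3$-type exponent is what ultimately produces the $\log^{1/3-o(1)}$ factor in the final bound). This yields
$$
\E^+(D') \ll K^{O(1)} |D'|^3 \exp(-c\log^{1/3-o(1)}|D'|),
$$
equivalently $|D'+D'| \gg |D'|\exp(c\log^{1/3-o(1)}|D'|)$. This step is the sole source of the non-trivial savings in the theorem.

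Finally, exploit the containment $D' \subseteq A-A$. Combinatorially, $D'+D' \subseteq 2A - 2A$, and Pl\"unnecke--Ruzsa bounds $|2A-2A|$ in terms of $|A|$ and $|A-A|$; energetically, $|D'+D'| \ge |D'|^4/\E^+(D')$. Comparing these two bounds and eliminating $|D'|$ via the BSG estimate $|D'| \gg K^{-O(1)}|D|$ yields a lower bound on $K$ which, depending on the balance between $|A-A|$ and $|A|$, forces either $\E^\times(D) \le C''(\eps)|A|^{3+\eps}$ (the structured regime, where $|A-A|$ is close to $|A|$ and the sum--product savings are swamped by a polynomial-in-$|A|$ factor) or $\E^\times(D) \le |A-A|^3 \exp(-C'(\eps)\log^{1/3-o(1)}|A|)$ (the generic regime). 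The main obstacle is precisely this last transfer: one must balance the Pl\"unnecke--Ruzsa losses against the subpolynomial savings so that the logarithm lands on $|A|$ rather than on $|D'|$ or some intermediate quantity, and so that the claimed two-term maximum emerges cleanly.
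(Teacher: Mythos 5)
You should first note that this paper does not actually prove Theorem \ref{t:RN_E*}: it is quoted from Roche--Newton and Zhelezov \cite{RN_Z}, and the text only sketches their argument. Their proof does begin, like yours, by extracting (essentially via Balog--Szemer\'edi--Gowers) a large piece of $D=A-A$ with small multiplicative doubling, but the heart of it is different: Sanders' quasi-polynomial Bogolyubov--Ruzsa theorem \cite{Sanders_Fr} is used to place that piece, up to acceptable losses, inside a multiplicative subgroup of $\C^*$ of small rank, and then the Subspace Theorem (unit-equation bounds, exponential in the rank) controls how many times elements of such a low-rank group can be written as $a_1-a_2$ with $a_1,a_2\in A$. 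The exponent $\log^{1/3-o(1)}$ comes precisely from balancing Sanders' rank bound $\log^{O(1)}K$ against that exponential-in-rank bound; it has nothing to do with Bourgain--Chang, which concerns integer sum--product and cannot be quoted here.

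Beyond this misattribution, your final transfer step has a genuine gap that the proposal itself flags but does not resolve, and it is fatal. Even granting the (non-standard) estimate $\E^{+}(D')\ll K^{O(1)}|D'|^{3}\exp(-c\log^{1/3-o(1)}|D'|)$, your comparison reads $K^{-O(1)}|D|\exp\bigl(c\log^{1/3-o(1)}|D'|\bigr)\ll |D'+D'|\le |2A-2A|\le (|D|/|A|)^{O(1)}|A|$, i.e. $\exp\bigl(c\log^{1/3-o(1)}|D'|\bigr)\ll K^{O(1)}(|D|/|A|)^{O(1)}$. The only non-trivial regime is $|D|\ge |A|^{1+\eps/3}$ (otherwise $\E^{\times}(D)\le |D|^{3}\le |A|^{3+\eps}$ holds trivially), and there $(|D|/|A|)^{O(1)}$ is a fixed positive power of $|A|$, so the displayed inequality is satisfied for every $K\ge 1$: the subpolynomial gain $\exp(\log^{1/3-o(1)})$ is swamped by the polynomial Pl\"unnecke--Ruzsa loss, and no lower bound on $K$ (hence no bound on $\E^{\times}(D)$) follows. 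The same vacuity persists even if you substitute the known polynomial-savings energy bounds for real or complex sets with small multiplicative doubling, because any comparison of the internal quantity $|D'+D'|$ against $|2A-2A|$ can never force $K$ up. What is missing is an input relating the multiplicative structure of $D'$ to the number of its representations as differences of $A$ -- an incidence-type bound on $\sum_{x\in D'}|\{(a_1,a_2)\in A^2: a_1-a_2=x\}|$ -- and that is exactly the role the Subspace Theorem plays in \cite{RN_Z}; no purely internal additive statement about $D'$ can substitute for it.
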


Thus, bound (\ref{f:RN_E*}) says us that the difference set $D=A-A$ enjoys  a non--trivial upper bound for
its
multiplicative energy.
The proof used a deep result of Sanders \cite{Sanders_Fr} and the Subspace Theorem of Schmidt (see, e.g., \cite{Sol_subspace}).
Roughly speaking, thanks to
the Subspace Theorem,
Roche--Newton and Zhelezov
obtained an upper bound for the sums from (\ref{f:problem_A_P}) for $P$ equal to a multiplicative subgroup of $\C$ of small rank (so $P$ automatically has small product set for trivial reasons)
and using Sanders' structural result they extended it to general sets with small multiplicative doubling -- see details in \cite{RN_Z}.

\bigskip

Theorem \ref{t:RN_E*} has the following consequence.

\begin{corollary}
    Let $A\subset \R$ be a finite set, let $D=A-A$, and let $\eps>0$ be a real number.
    Then for some constant $C'(\eps)>0$ one has
\begin{equation}\label{f:RN_prod*}
    |DD|,\, |D/D|  \gg_\eps |D| \cdot \min\{ |D|^3 |A|^{-(3+\eps)},  \exp(C'(\eps) \log^{1/3-o(1)} |D|) \} \,.
\end{equation}
\label{c:RN_prod*}
\end{corollary}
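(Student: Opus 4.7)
The plan is to deduce the corollary directly from Theorem \ref{t:RN_E*} by the standard Cauchy--Schwarz inequality that relates multiplicative energy to product and quotient sets. Recall that
$$\E^\times(D) = \sum_{t} r_{DD}(t)^2 = \sum_{t} r_{D/D}(t)^2,$$
where $r_{DD}(t) = |\{(a,b) \in D\times D : ab=t\}|$ and $r_{D/D}(t) = |\{(a,b) \in D\times D : a/b=t\}|$. Since $\sum_t r_{DD}(t) = \sum_{t \neq 0} r_{D/D}(t) = |D|^2$ (up to a trivial term if $0 \in D$), Cauchy--Schwarz gives
$$|DD| \cdot \E^\times(D) \;\gs\; |D|^4, \qquad |D/D| \cdot \E^\times(D) \;\gs\; |D|^4.$$
Thus to prove the corollary it suffices to invert the upper bound of Theorem \ref{t:RN_E*} on $\E^\times(D)$.

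Substituting the two branches of \eqref{f:RN_E*} into $|DD| \gs |D|^4/\E^\times(D)$ and using $\max\{X,Y\}^{-1} = \min\{X^{-1},Y^{-1}\}$, I obtain
$$|DD| \;\gg_\eps\; \min\!\left\{\frac{|D|^4}{|A|^{3+\eps}},\ |D| \exp\!\big(C'(\eps)\log^{1/3-o(1)} |A|\big)\right\},$$
and the same lower bound for $|D/D|$. Factoring $|D|$ out of each term inside the minimum gives precisely the shape of \eqref{f:RN_prod*}, up to replacing $\log|A|$ by $\log|D|$.

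The last replacement is the only point that needs a word of comment: since $A \subseteq D+\{a_0\}$ for any fixed $a_0 \in A$ we have $|A| \le |D|$, while trivially $|D| \le |A|^2$, so $\log|A| \asymp \log|D|$ and the $\log^{1/3 - o(1)}$ factor is unchanged (the $o(1)$ term absorbs the constant). I do not anticipate any obstacle; the argument is a direct combination of Theorem \ref{t:RN_E*} with the Cauchy--Schwarz lower bound on $|DD|$ and $|D/D|$ in terms of $\E^\times(D)$.
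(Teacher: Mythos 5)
Your argument is correct and is exactly the intended deduction: the paper states Corollary \ref{c:RN_prod*} as a direct consequence of Theorem \ref{t:RN_E*}, obtained by the Cauchy--Schwarz bound $|DD|,|D/D|\gg |D|^4/\E^\times(D)$ and inverting the maximum, with $\log|A|\asymp\log|D|$ (from $|A|\le|D|\le|A|^2$) absorbed into the constant $C'(\eps)$. The only cosmetic point is that $\sum_t r_{D/D}(t)^2$ is in general only $\le \E^\times(D)$ when $0\in D$, but that is the inequality direction you need, so nothing changes.
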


Avoiding using  either of the
strong results of Sanders and Schmidt, we prove the following result (see Theorem \ref{t:DD} of section \ref{sec:proof}).

\begin{theorem}
    Let $A\subset \R$ be a finite set and let $D=A-A$.
    Then
\begin{equation}\label{f:RN_prod*_my}
    |D D|,\, |D/D|  \gg |D|^{1+\frac{1}{12}} \log^{-\frac{1}{4}} |D|  \,.
\end{equation}
\label{t:RN_prod*_my}
\end{theorem}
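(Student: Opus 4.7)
The plan is to bound the multiplicative energy
\[
\E^\times(D) := \#\{(d_1, d_2, d_3, d_4) \in D^4 : d_1 d_2 = d_3 d_4\}
\]
from above: by Cauchy--Schwarz $|DD|,\, |D/D| \geqslant |D|^4/\E^\times(D)$, so the target becomes $\E^\times(D) \ll |D|^{35/12}\log^{1/4}|D|$. The two main ingredients are Solymosi's sum--product inequality and the Pl\"unnecke--Ruzsa inequality applied to the representation $D = A-A$.

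I would begin by applying the energy form of Solymosi's inequality (after the standard reduction to the positive part of $D$), which gives $\E^\times(D) \ll \log|D|\cdot |D+D|^2$. Since $D+D = 2A-2A$, Pl\"unnecke--Ruzsa with doubling ratio $K = |A-A|/|A| = |D|/|A|$ yields $|D+D| \leqslant K^4|A| = |D|^4/|A|^3$. Substituting gives
\[
|DD|,\,|D/D| \gg \frac{|A|^6}{|D|^4\,\log|D|}\,,
\]
which is at least $|D|^{1+1/12}/\log^{1/4}|D|$ as soon as $|A| \gg |D|^{61/72}\log^{1/8}|D|$.

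In the complementary regime $|A| \ll |D|^{61/72}\log^{1/8}|D|$ (the ``Sidon-like'' range, where $A$ has large additive doubling), I would pass to a popular subset of $D$. Starting from $\E^+(A) \geqslant |A|^4/|D|$, a dyadic pigeonhole on the representation function $r_A(d) := \#\{(a,a') \in A^2 : a-a' = d\}$ produces a level set $P = \{d \in D : r_A(d) \asymp \tau\}$ satisfying $|P|\tau^2 \gg |A|^4/(|D|\log|D|)$ and $|P|\tau \leqslant |A|^2$ for some dyadic $\tau$. Since $P \subseteq D$ we have $PP \subseteq DD$, $P/P \subseteq D/D$, and $P+P \subseteq 2A-2A$, so Pl\"unnecke--Ruzsa controls $|P+P| \leqslant |D|^4/|A|^3$; Solymosi applied to $P$ then produces a second lower bound for $|DD|$ and $|D/D|$. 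Optimizing over $\tau$ and combining the two bounds yields the required estimate uniformly in $|A|$.

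The main obstacle is the tightness of this dichotomy: the Pl\"unnecke--Ruzsa bound $|2A-2A| \leqslant |D|^4/|A|^3$ becomes weak precisely when $|A|$ approaches $|D|^{1/2}$, where the direct application of Solymosi's inequality to $D$ degenerates to the trivial $\E^\times(D) \leqslant |D|^3$. The popular-differences step is what makes the argument uniform in $|A|$, and the precise exponent $1/12$ together with the logarithmic factor $\log^{-1/4}|D|$ arises from balancing the two lower bounds at the critical threshold $|A| \asymp |D|^{61/72}\log^{1/8}|D|$.
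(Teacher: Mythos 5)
There is a genuine gap, and it sits exactly where you flagged the ``main obstacle.'' Your first branch (Solymosi's energy inequality plus Pl\"unnecke applied to $D+D=2A-2A$) only works when $|A|\gg |D|^{61/72}$, and your second branch does not rescue the complementary range. The critical case is $A$ Sidon-like, $|A|\approx|D|^{1/2}$: then the Pl\"unnecke bound $|D+D|\le |D|^4/|A|^3\approx|D|^{5/2}$ is weaker than the trivial $|D+D|\le|D|^2$, and indeed $|2A-2A|$ can genuinely be of order $|A|^4=|D|^2$, so Solymosi gives only $\E^\times(D)\ll|D|^4\log|D|$, worse than the trivial $\E^\times(D)\le|D|^3$. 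The popular-subset step changes nothing here: for a Sidon set every representation count is $1$, so $\tau\approx1$ and $P\approx D$; and in general your second branch uses only $P\subseteq D$ and $P+P\subseteq 2A-2A$, i.e.\ the same upper bound $|P+P|\le|D|^4/|A|^3$ independent of $\tau$, so the resulting lower bound $|DD|\gg|P|^4|A|^6/(|D|^8\log|D|)$ is maximized at $P=D$ and is then identical to the first branch --- larger $\tau$ only shrinks $|P|$ and weakens it. No optimization over $\tau$ can therefore close the range $|A|\ll|D|^{61/72}$, and ``combining the two bounds'' does not yield the theorem uniformly in $|A|$.

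There is also a structural issue with the opening reduction: passing to $\E^\times(D)$ via Cauchy--Schwarz commits you to proving $\E^\times(A-A)\ll|D|^{35/12}\log^{1/4}|D|$, which is substantially stronger than the theorem; the best known bound of this kind (Theorem \ref{t:RN_E*}, due to Roche--Newton and Zhelezov, using Sanders' theorem and the Subspace Theorem) saves only a subexponential factor over $|D|^3$ in the worst case, not a power. The paper deliberately avoids the energy route: the missing ingredient in your argument is a mechanism that extracts multiplicative growth from the internal structure of $D=A-A$ itself rather than from an upper bound on $|D+D|$. Concretely, the paper works with $R=R[A]=\{(a_1-a)/(a_2-a)\}$, exploits the identity $R=1-R$, bounds $|R|=|-R\cap(R-1)|\ll|D|^{-2/3}|DD/D|^{4/3}$ by a Szemer\'edi--Trotter argument (Lemma \ref{l:2/3}), feeds in $|R|\gg|A|^2/\log|A|$ from the collinear-triples bound, and only then uses Pl\"unnecke to replace $|DD/D|$ by $\min\{|DD|,|D/D|\}$; this is what survives the Sidon regime, since there $|R|\gg|A|^2/\log|A|\approx|D|/\log|D|$ is large precisely because $A$ has many distinct difference ratios.
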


The bound (\ref{f:RN_prod*_my}) can be considered as a new necessary condition for a set to be a difference set of the form $A-A$.
Namely, any such set must have a large product set and a large quotient set.

One might think that the optimal version of (\ref{f:RN_prod*_my}) should state that
$|DD|, |D/D| \gg |D|^{2-\eps}$ for arbitrary $\eps>0$, but this is not true: see Proposition \ref{p:lower_D} which gives examples of sets $A$ with
$|DD|, |D/D| \ll |D|^{3/2}$.

Also, it was conjectured in \cite{RN_Z} that if $|(A+ A) / (A+A)| \ll |A|^2$ or $|(A- A) / (A-A)| \ll |A|^2$, then $|A\pm A| \ll |A|$. The authors obtained some first results in this  direction.
A refined version of Theorem \ref{t:RN_prod*_my},  Theorem \ref{t:DD} below, implies the following result.

\begin{corollary}
    Let $A\subset \R$ be a finite set.
    Suppose that  $|(A- A) / (A-A)| \ll |A|^2$ or  $|(A- A) (A-A)| \ll |A|^2$.
    Then
\begin{equation*}\label{}
    |A-A| \ll |A|^{2-\frac{1}{5}} \log^{\frac{3}{10}} |A| \,.
\end{equation*}
\label{c:RN_intr}
\end{corollary}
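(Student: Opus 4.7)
The plan is to derive Corollary~\ref{c:RN_intr} from the refined Theorem~\ref{t:DD} by a single substitution. Write $D = A - A$ and set $M = |D|$. I expect Theorem~\ref{t:DD} to yield a bound of the shape
\[
|DD|,\ |D/D| \gg M^{1 + c}\,\log^{-\mu} M
\]
with sharpened parameters $(c,\mu) = (1/9,1/3)$, since these are precisely the values for which the exponent $2/(1+c) = 9/5$ and the log exponent $9\mu/10 = 3/10$ match the conclusion of the corollary.

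Next I will split into the two cases of the hypothesis. Consider first the case $|D/D| \ll |A|^2$; the case $|DD| \ll |A|^2$ is identical. Substituting the hypothesis directly into the refined bound yields
\[
|A|^2 \gg M^{10/9}\,\log^{-1/3} M,
\]
which rearranges to $M \ll |A|^{9/5}\,\log^{3/10} M$. The trivial inequality $M \leq |A|^2$ then lets me replace $\log M$ by $\log |A|$ at a harmless constant cost, producing
\[
|A - A| \ll |A|^{9/5}\,\log^{3/10} |A|,
\]
which is exactly the bound claimed.

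The only genuine obstacle is that Theorem~\ref{t:RN_prod*_my}, as stated with exponent $c = 1/12$, is \emph{not} strong enough: plugging its bound into the argument above produces only $|A-A| \ll |A|^{24/13}\,\log^{3/13} |A|$, since $2/(1 + 1/12) = 24/13 > 9/5$. The corollary therefore depends crucially on the refined Theorem~\ref{t:DD}, which sharpens the exponent $1/12$ to $1/9$. I expect this sharpening to come from tracking an additional parameter inside the proof of the main theorem --- for instance by replacing a one-step dyadic pigeonhole on popular differences by a two-scale argument, or by exploiting the large multiplicative energy of $D$ that the assumption $|DD| \ll |A|^2$ automatically supplies and combining it more tightly with the additive lower bound $\E^{+}(A) \geq |A|^4/|D|$ forced by $D = A-A$. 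Once Theorem~\ref{t:DD} is available with exponent $1/9$, the corollary reduces to the arithmetic above.
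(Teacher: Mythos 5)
Your reduction of the corollary to a single substitution is the right idea, but the ingredient you substitute into does not exist in the form you assume, and you do not prove it. You posit a ``refined Theorem \ref{t:DD}'' of the pure form $|DD|,\,|D/D| \gg |D|^{1+1/9}\log^{-1/3}|D|$, chosen by reverse-engineering the exponents, and then only speculate (two-scale pigeonhole, multiplicative energy of $D$) about how such a bound might be proved. The paper proves no such bound: its best pure-$|D|$ estimate is exactly the $|D|^{1+1/12}\log^{-1/4}|D|$ of Theorem \ref{t:RN_prod*_my}, which, as you yourself observe, is too weak. So as written your argument has an unproved core step, and the mechanism you guess for closing it is not substantiated.

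The missing idea is that the refinement is not a better power of $|D|$ but a \emph{mixed} bound in $|D|$ and $|A|$: the first inequality of (\ref{f:DD}) in Theorem \ref{t:DD} reads $|DD|,\,|D/D| \gg |D|^{5/6}|R[A]|^{1/4} \gg |D|^{5/6}|A|^{1/2}\log^{-1/4}|A|$, the factor $|A|^{1/2}$ coming from $|R[A]| \gg |A|^2/\log|A|$ (Theorem \ref{t:R_size}). With this, the corollary is the one-line computation you intended: if $|DD| \ll |A|^2$ or $|D/D| \ll |A|^2$, then $|D|^{5/6} \ll |A|^{3/2}\log^{1/4}|A|$, hence $|A-A| = |D| \ll |A|^{9/5}\log^{3/10}|A|$, which is the claimed $|A|^{2-1/5}\log^{3/10}|A|$. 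Note that your hypothesized pure bound with exponent $1/9$ only becomes true \emph{a posteriori} under the corollary's hypothesis (since then $|A| \gg |D|^{5/9}$, so $|D|^{5/6}|A|^{1/2} \gg |D|^{10/9}$); using it as an input is circular. Keep the arithmetic, but replace the speculative refinement by the stated mixed inequality of Theorem \ref{t:DD}.
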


\bigskip

A simple consequence of the conjectured bound (\ref{f:problem_A_P}) is that
$A-A \neq P$
for sets $P$ with small product/quotient set.
Our weaker estimate (\ref{f:RN_prod*_my}) gives the same, so, in particular, geometric progressions are not (symmetric) difference sets of the form $A-A$.
An analog of geometric progressions in prime fields $\F_p$
are {\it multiplicative subgroups}.
Obtaining
an appropriate version
of Theorem \ref{t:RN_prod*_my} in the finite-fields setting and using further tools,
we
obtain the following theorem.

\begin{theorem}
Let $p$ be a prime number, let $\G \subset \F_p$ be a multiplicative subgroup, let $|\G| < p^{3/4}$, and let $\xi \neq 0$ be an arbitrary residue.
Suppose that for some $A\subset \F_p$
one has
\begin{equation}\label{cond:A-A_new_intr}
    A-A \subseteq \xi \G \bigsqcup \{ 0 \} \,.
\end{equation}
    Then $|A| \ll |\G|^{4/9}$.
    If $|\G| \ge p^{3/4}$, then $|A| \ll |\G|^{4/3} p^{-2/3}$.
    In particular, for any $\eps>0$ and sufficiently large $\G$, $|\G| \le p^{4/5-\eps}$ we have that
    $$
        A-A \neq \xi \G \bigsqcup \{ 0 \} \,.
    $$
\label{t:A-A_new_intr}
\end{theorem}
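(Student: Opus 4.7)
The plan is to combine a finite-field analog of Theorem~\ref{t:RN_prod*_my} (or rather its refined form Theorem~\ref{t:DD}) with the trivial observation that a subset of a coset of a multiplicative subgroup has very small product and quotient sets. Setting $D := A-A$, the hypothesis $A-A \subseteq \xi\G \sqcup \{0\}$ gives $D \subseteq \xi\G \cup \{0\}$; since $\G$ is a multiplicative subgroup, $DD \subseteq \xi^2\G \cup \{0\}$ and $D/D \subseteq \G \cup \{0\}$, so
\[
    |DD|,\, |D/D| \;\le\; |\G| + 1.
\]
The $\F_p$-analog of Theorem~\ref{t:RN_prod*_my} should, in the other direction, lower-bound $|DD|$ and $|D/D|$ in terms of $|A|$, and confronting the two estimates will yield the desired bound.

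In the regime $|\G| < p^{3/4}$ I would prove a prime-field version of Theorem~\ref{t:DD} whose conclusion translates to $|DD|,\, |D/D| \gg |A|^{9/4}$ (up to logarithmic factors). The natural substitute for the Szemer\'edi--Trotter theorem used in the real case is Rudnev's point-plane incidence theorem, which is effective precisely below the threshold $p^{3/4}$; together with the Pl\"unnecke--Ruzsa inequality and sum-product energy estimates for the difference set $D$, it should yield $|A| \ll |\G|^{4/9}$. For $|\G| \ge p^{3/4}$, Rudnev's estimate picks up a $p$-dependent error term, and redoing the incidence/energy computation in this regime produces the weaker bound $|A| \ll |\G|^{4/3} p^{-2/3}$.

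The final ``in particular'' assertion then drops out: if $A-A = \xi\G \sqcup \{0\}$, then $|A-A| = |\G|+1$, and $|A-A| \le |A|^2$ forces $|\G|+1 \le |A|^2$. The first bound gives $|A|^2 \ll |\G|^{8/9}$, while the second gives $|A|^2 \ll |\G|^{8/3} p^{-4/3}$, the latter being below $|\G|$ exactly when $|\G| < p^{4/5}$. Hence for sufficiently large $|\G| \le p^{4/5-\eps}$ the equality is impossible. The main obstacle will be the $\F_p$-version of Theorem~\ref{t:DD}: the real-case estimate $|DD| \gg |D|^{13/12}$ together with $|D| \ge |A|$ would only give $|A| \ll |\G|^{12/13}$, which is far too weak. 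The required improvement must exploit both the difference-set structure of $D = A-A$ and the sharp $\F_p$-incidence bounds coming from Rudnev's theorem, with careful control on the sizes of intermediate sets to keep them inside the effective range.
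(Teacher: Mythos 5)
There is a genuine gap: your whole argument rests on a hypothetical prime-field bound of the form $|DD|,\,|D/D| \gg |A|^{9/4}$ for the difference set $D=A-A$, and no such general bound exists. For an arithmetic progression $A$ one has $|D|\sim 2|A|$ and $|D/D|\ll |A|^2$, so the claimed inequality is false as a general statement; and even under the most favourable assumption $|D|\sim|A|^2$, the actual finite-field analogue of Theorem~\ref{t:DD} (Theorem~\ref{t:DD_Fp}) only gives $|DD|,|D/D|\gg |D|^{19/24}|A|^{3/8}\le |A|^{47/24}$, while the real Szemer\'edi--Trotter version tops out at $|D|^{5/6}|A|^{1/2}\le|A|^{13/6}$. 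Feeding such bounds into $|D/D|\le|\G|+1$ would give at best $|A|\ll|\G|^{24/47}$, which is weaker even than the known Shparlinski-type estimate $|A|\le|\G|^{1/2+o(1)}$, let alone the claimed $|A|\ll|\G|^{4/9}$. Incidence estimates alone, whether Szemer\'edi--Trotter type or Rudnev's point--plane theorem, cannot produce the exponent $4/9$, because they only use that $\G$ has small product set, not that it is a genuine multiplicative subgroup.

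The paper's proof exploits exactly that extra structure, and through a different object: the ratio set $R=R[A]=\{(a_2-a)/(a_1-a)\}$, which satisfies the identity $R=1-R$ (formula~(\ref{f:R_main})) and, by the collinear-triples bound $\T(A)\ll|A|^{9/2}$ of Theorem~\ref{t:T(A)_Fp}, has size $|R|\gg|A|^{3/2}$ (here the hypothesis forces $|A|\le|\G|^{1/2+o(1)}<p^{2/3}$, so the triples bound applies). Since $A-A\subseteq\xi\G\sqcup\{0\}$ gives $R\subseteq\G\sqcup\{0\}$, the identity $R=1-R$ yields $|A|^{3/2}\ll|R|\le|\G_*\cap(1-\G_*)|$ with $\G_*=\G\sqcup\{0\}$, and this additive-shift intersection of a subgroup is bounded by Stepanov-method results: $\ll|\G|^{2/3}$ when $|\G|<p^{3/4}$ (the $k=1$ case of Theorem~\ref{t:main_many_shifts}), giving $|A|\ll|\G|^{4/9}$, and $\ll|\G|^2/p$ for $|\G|\ge p^{3/4}$ by the character-sum Lemma~\ref{l:C_for_subgroups}, giving $|A|\ll|\G|^{4/3}p^{-2/3}$. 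Your ``$p$-dependent error term from Rudnev'' plays no role here; the two regimes correspond to two different subgroup-intersection estimates. Your deduction of the final non-representability statement from the two main bounds (via $|\G|+1=|A-A|\le|A|^2$) is fine and even avoids the cited $|A|\sim|\G|^{1/2+o(1)}$ result, but the main bounds themselves are not reachable along the route you propose.
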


Results on representations of sets (and multiplicative subgroups, in particular) as sumsets or product sets can be found in \cite{DS_AD}, \cite{GK}, \cite{Sarkozy_residues}, \cite{Solodkova_S}, \cite{Shparlinski_AD}.
For example, it was proved in \cite{Shparlinski_AD} that any set satisfying (\ref{cond:A-A_new_intr}) has size $|A| \le |\G|^{1/2+o(1)}$.
In \cite{Solodkova_S} the author refined this result in the special case when $A$ is a multiplicative subgroup and obtained  the estimate
$|A| \le |\G|^{1/3+o(1)}$.
In our new  Theorem \ref{t:A-A_new_intr} we have to deal with the general case of arbitrary set $A$ and it is the first result of such type.

\bigskip

The results of the
article
allow us to make
a first tiny tiny step towards answering a beautiful question of P.~Hegarty~\cite{Hegarty_question}.
\medskip

{\bf Problem.}
{\it Let $P \subseteq A+A$ be a strictly convex (concave) set.
Is it necessarily true that $|P| = o (|A|^2)$?
}

\medskip
Recall that a sequence of real numbers $A = \{ a_1 < a_2 < \dots < a_n \}$ is called strictly {\it convex (concave)}
if the consecutive differences $a_i - a_{i-1}$ are strictly increasing (decreasing).
It is known  that from a combinatorial point of view sets $P$ with small product set have behaviour  similar to convex (concave) sets (see \cite{SS1} or discussion before Corollary \ref{c:convex_d}), but in the opinion of the author they have a simpler structure.
%
The following proposition is a consequence of Theorem \ref{t:RN_prod*_my}.

\begin{corollary}
    Let $A \subset \R$ and
    let $D=A-A$.
    Suppose that $|DD| \le M|D|$ or $|D/D| \le M |D|$, where $M\ge 1$.
    Then
\begin{equation*}\label{f:further1_p}
    |A| \ll_M 1 \,.
\end{equation*}
\label{c:M^C_intr}
\end{corollary}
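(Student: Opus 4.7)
The plan is to extract Corollary \ref{c:M^C_intr} as a direct quantitative consequence of Theorem \ref{t:RN_prod*_my}. Setting $D = A - A$, that theorem furnishes the lower bound
$$
    |DD|,\ |D/D| \gg |D|^{1 + \frac{1}{12}} \log^{-\frac{1}{4}} |D| \,.
$$
The first step is to combine this with whichever hypothesis is assumed, $|DD| \le M|D|$ or $|D/D| \le M|D|$, to produce
$$
    |D|^{\frac{1}{12}} \log^{-\frac{1}{4}} |D| \ll M \,.
$$
Since the left-hand side tends to infinity with $|D|$, this inequality forces $|D| \ll_M 1$; in fact, inverting the slowly varying logarithm gives a quantitative bound of the shape $|D| \ll M^{12} (\log(M+2))^{3}$.

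Next, I would invoke the elementary bound $|A| \le |A - A| = |D|$, which is valid for any finite $A \subset \R$ (indeed $|A-A| \ge 2|A|-1$, but the weaker form suffices). Combined with the previous step this yields $|A| \ll_M 1$, as claimed.

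The only point requiring a moment's care is the degenerate regime $|D| = 1$, where the $\log^{-1/4}|D|$ factor is ill-defined; but in that case $A$ is a singleton and the conclusion is trivial. Beyond this, there is no substantive obstacle: all the real work has been done in establishing Theorem \ref{t:RN_prod*_my}, and Corollary \ref{c:M^C_intr} merely repackages it as a striking structural dichotomy — bounded multiplicative (or division) doubling of the difference set is so restrictive that it collapses the size of $A$ itself to $O_M(1)$.
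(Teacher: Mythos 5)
Your proposal is correct and follows exactly the route the paper intends: Corollary~\ref{c:M^C_intr} is stated as a direct consequence of Theorem~\ref{t:RN_prod*_my}, obtained by comparing the lower bound $|DD|,|D/D|\gg |D|^{1+\frac{1}{12}}\log^{-\frac14}|D|$ with the hypothesis $\le M|D|$ to force $|D|\ll_M 1$, and then using $|A|\le |A-A|=|D|$. Your handling of the degenerate case $|D|=1$ is a harmless extra precaution; nothing further is needed.
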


Thus, Corollary \ref{c:M^C_intr} proves the conjecture of Hegarty in the case of pure difference sets $P$ instead of sumsets and where we assume that $P$
has
small product/quotient set instead of assuming convexity.

\bigskip

In the proof we develop some ideas from \cite{Solodkova_S},
combining them with
the
Szemer\'{e}di--Trotter Theorem (see section \ref{sec:SzT}), as well as
with
a new simple combinatorial observation, see formula (\ref{f:R_main}).
The last formula tells  us that if one forms a set $D/D$, $D:=A-A$ (which is known as $Q[A]$ in the literature, see e.g. \cite{TV}), then the set $D/D$ contains a large subset $R \subseteq D/D$ which is {\it additively rich}.
Namely, we consider
$$
    R =  R [A] = \left\{ \frac{a_1-a}{a_2 - a} ~:~ a,a_1,a_2\in A,\, a_2 \neq a \right\} \subseteq D/D
$$
and note that $R = 1-R$.
By the Szemer\'{e}di--Trotter Theorem
the existence of such additive structure in $R$
means that the product of $R$ is large and hence the product of $D$ is large as well.

The paper is organized as follows.
In section \ref{sec:preliminaries} we give a list of the results, which will be further used  in the text.
In the next section we discuss some consequences of the Szemer\'{e}di--Trotter Theorem in its uniform and modern form.
In Section \ref{sec:proof} we prove our main Theorem \ref{t:DD} which implies  Theorem \ref{t:RN_prod*_my} and Corollary \ref{c:RN_intr}.
In the next section we deal with the prime fields case and obtain Theorem \ref{t:A-A_new_intr}
above.
Finally, the constants in Theorem \ref{t:RN_prod*_my} and Corollary \ref{c:RN_intr} can be improved in the case of the quotient set $D/D$ but it requires  much more work -- see section \ref{sec:refined}.
In the appendix we discuss some generalizations of the quantities from section \ref{sec:SzT}.

Let us
conclude with a few comments regarding the notation used in this paper.
All logarithms are to base $2.$ The signs $\ll$ and $\gg$ are the usual Vinogradov symbols.
When the constants in the signs depend on some parameter $M$, we write $\ll_M$ and $\gg_M$.

The author is grateful to D. Zhelezov and S. Konyagin for useful discussions,
and to O. Roche--Newton and M. Rudnev who pointed out to him how to improve Theorems \ref{t:DD}, \ref{t:DD_Fp}.
Also, he  thanks the anonymous referees for a careful reading of the text and for helpful suggestions.

\section{Preliminaries}
\label{sec:preliminaries}

Let $\Gr = (\Gr, +)$ be an abelian group with the group operation $+$.
We
begin with the famous Pl\"{u}nnecke--Ruzsa inequality (see e.g. \cite{TV}).

\begin{lemma}
Let $A,B\subseteq \Gr$ be two finite sets with $|A+B| \le K|A|$.
Then for all positive integers $n,m$ we have the inequality
\begin{equation}\label{f:Plunnecke}
    |nB-mB| \le K^{n+m} |A| \,.
\end{equation}
Furthermore, for any $0< \d < 1$ there exists $X \subseteq A$ such that $|X| \ge (1-\d) |A|$ and such that for any integer $k$ one has
\begin{equation}\label{f:Plunnecke_X}
    |X+kB| \le (K/\d)^k |X| \,.
\end{equation}
\label{l:Plunnecke}
\end{lemma}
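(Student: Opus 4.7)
The plan is to follow the modern Petridis-style proof of the Pl\"{u}nnecke--Ruzsa inequality, which bypasses graph theory entirely. First I would establish the Ruzsa-type triangle inequality: for any finite sets $U$, $V_1$, $V_2$ in an abelian group, $|U| \cdot |V_1 - V_2| \le |U + V_1| \cdot |U + V_2|$, proved by fixing, for each $d \in V_1 - V_2$, a representation $d = v_1(d) - v_2(d)$ with $v_i(d) \in V_i$ and observing that $(u, d) \mapsto (u + v_1(d), u + v_2(d))$ is an injection from $U \times (V_1 - V_2)$ to $(U + V_1) \times (U + V_2)$. This reduces the bound $|nB - mB| \le K^{n+m}|A|$ to control of $|X + nB|$ and $|X + mB|$ for a single convenient $X \subseteq A$.

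Next I would prove Petridis's lemma. Take a nonempty $X \subseteq A$ minimizing $K_0 := |X + B|/|X|$, so $K_0 \le K$. I claim that $|X + B + C| \le K_0 |X + C|$ for every finite $C$. The proof is by induction on $|C|$: when a single new element is appended to $C$, the genuinely new points of $X + B + C$ come from a specific subset $X' \subseteq X$, and the minimality of $K_0$ over all nonempty subsets of $X$ bounds this increment in the form required. Iterating with $C = B, 2B, \ldots$ yields $|X + kB| \le K_0^k |X|$ for every $k \ge 0$. Combined with the triangle inequality above,
\[
|X| \cdot |nB - mB| \le |X + nB| \cdot |X + mB| \le K_0^{n+m} |X|^2,
\]
so $|nB - mB| \le K^{n+m} |A|$, which is (\ref{f:Plunnecke}).

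For the refinement (\ref{f:Plunnecke_X}) I would build $X$ greedily as a disjoint union $Y_1 \sqcup Y_2 \sqcup \cdots$: at stage $j$, apply the Petridis construction to the residual set $A_j := A \setminus (Y_1 \sqcup \cdots \sqcup Y_{j-1})$ to extract a nonempty $Y_j \subseteq A_j$ enjoying the iterated Petridis bound. As long as $|Y_1 \sqcup \cdots \sqcup Y_{j-1}| < (1-\d)|A|$, we have $|A_j| > \d|A|$ and hence the relevant ratio satisfies $|A_j + B|/|A_j| \le K|A|/(\d|A|) = K/\d$, which passes to $Y_j$ as $|Y_j + kB| \le (K/\d)^k |Y_j|$. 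Summing the per-piece bounds over the disjoint union gives $|X + kB| \le (K/\d)^k |X|$, and one halts the process as soon as $|X| \ge (1-\d)|A|$.

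The main obstacle is Petridis's lemma itself: its inductive step is short but relies essentially on $K_0$ being the minimum of $|Y + B|/|Y|$ over all nonempty $Y \subseteq X$, not merely over subsets of $A$. Once that lemma is secured, everything else reduces to additive triangle-inequality bookkeeping together with the greedy disjoint-union accounting for (\ref{f:Plunnecke_X}).
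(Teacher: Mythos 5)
Your proposal is correct, but it cannot be compared with an argument in the paper itself: the paper does not prove Lemma \ref{l:Plunnecke} at all, it simply quotes the Pl\"unnecke--Ruzsa inequality from the literature (the reference \cite{TV}, where the classical proof of the first bound proceeds via Pl\"unnecke's graph-theoretic method, i.e.\ commutative layered graphs and Menger's theorem). Your route is the modern one: Petridis's lemma (choosing $X\subseteq A$ minimizing $|X+B|/|X|$ and proving $|X+B+C|\le K_0|X+C|$ by induction on $|C|$), iterated to get $|X+kB|\le K_0^k|X|$, followed by the Ruzsa triangle inequality $|X|\,|nB-mB|\le |X+nB|\,|X+mB|$; this is a complete and more elementary proof of (\ref{f:Plunnecke}), and you correctly flag the one delicate point, namely that the inductive step needs minimality of the ratio over all nonempty subsets of $X$ (which the minimizer over subsets of $A$ automatically enjoys). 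Your greedy disjoint-union argument for (\ref{f:Plunnecke_X}) is also sound: as long as the accumulated union has size less than $(1-\d)|A|$, the residual set $A_j$ satisfies $|A_j|>\d|A|$, hence $|A_j+B|/|A_j|\le K/\d$, so the Petridis minimizer $Y_j\subseteq A_j$ obeys $|Y_j+kB|\le (K/\d)^k|Y_j|$, and summing over the disjoint pieces gives the claim; the process terminates since each $Y_j$ is nonempty. Two small remarks: the lemma as stated says ``for any integer $k$'' but your argument (like the standard statement and the paper's application, which uses $k=2$ in multiplicative form) covers $k\ge 0$, which is what is intended; and in the final summation you should say explicitly that $X+kB=\bigcup_j (Y_j+kB)$, so subadditivity of cardinality applies. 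What your approach buys over the cited classical proof is self-containedness and brevity; what the citation buys the paper is, of course, not having to reproduce any of this.
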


We also need Ruzsa's triangle inequality (see, e.g., \cite{TV}).

\begin{lemma}
    Let $A,B,C \subseteq \Gr$ be three finite sets.
    Then
\begin{equation}\label{f:Ruzsa_triangle}
    |C| |A-B| \le |A-C| |B-C| \,.
\end{equation}
\label{l:Ruzsa_triangle}
\end{lemma}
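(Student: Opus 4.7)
The plan is to prove the inequality by exhibiting an explicit injection from $C \times (A-B)$ into $(A-C) \times (B-C)$, from which the bound on cardinalities is immediate. This is the standard Pl\"unnecke--Ruzsa style trick and fits naturally with the other inequalities listed in this section.

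First I would fix, once and for all, for each $d \in A-B$ a specific representation $d = a_d - b_d$ with $a_d \in A$ and $b_d \in B$ (such a pair exists by the definition of $A-B$; any choice works). Using these chosen representatives, define
\[
\phi : C \times (A-B) \longrightarrow (A-C) \times (B-C), \qquad \phi(c,d) := (a_d - c,\; b_d - c).
\]
The target is correct by construction since $a_d - c \in A-C$ and $b_d - c \in B-C$ for every $c \in C$ and $d \in A-B$.

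The only thing to check is that $\phi$ is injective. Suppose $\phi(c,d) = \phi(c',d')$, so that $a_d - c = a_{d'} - c'$ and $b_d - c = b_{d'} - c'$. Subtracting the two equations eliminates $c, c'$ and yields $a_d - b_d = a_{d'} - b_{d'}$, i.e.\ $d = d'$; hence $a_d = a_{d'}$ and $b_d = b_{d'}$, and plugging back into either coordinate gives $c = c'$. Injectivity forces $|C|\cdot|A-B| \le |A-C|\cdot|B-C|$, which is the desired inequality.

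There is no real obstacle here; the only conceptual subtlety is that the representatives $(a_d, b_d)$ must be selected \emph{before} defining $\phi$, since otherwise $\phi$ would be multi-valued and the injectivity step would break. No hypothesis beyond $\Gr$ being an abelian group enters the argument, which is why the result is so broadly applicable in the rest of the paper.
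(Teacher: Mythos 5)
Your proof is correct: the paper itself does not prove this lemma but simply cites it (as Ruzsa's triangle inequality, from Tao--Vu), and your injection $\phi(c,d)=(a_d-c,\,b_d-c)$ with fixed representatives $d=a_d-b_d$ is exactly the standard argument given in that reference, including the correct handling of injectivity.
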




In this paper we have to deal with the quantity $\T (A,B,C,D)$, see \cite{R_Minkovski}, \cite{Solodkova_S}
($\T$ standing for collinear {\it triples})
$$
    \T (A,B,C,D) := \sum_{c \in C,\, d\in D} \E^\times (A-c,B-d)
        =
$$
\begin{equation}\label{f:T_energy}
        =
            |\{ (a-c) (b-d) = (a'-c)(b'-d) ~:~ a,a'\in A,\, b,b' \in B,\, c\in C,\, d\in D \}|  \,.
\end{equation}
If $A=B$ and $C=D$, then we write $\T(A,C)$ for $\T (A,A,C,C)$
and we write $\T(A)$ for $\T (A,A,A,A)$.

\bigskip

In  \cite{J_PhD} Jones proved  a good upper estimate for the quantity $\T(A)$, $A\subset \R$
(another proof was obtained by Roche--Newton in \cite{R_Minkovski}).
Upper bounds for $\T(A)$ when $A$
belongs to a prime field can be found in \cite{AMRS} and \cite{Solodkova_S}.

\begin{theorem}
    Let $A \subset \R$ be finite.
    Then
\begin{equation}\label{f:R_Minkovski}
    \T(A) \ll |A|^4 \log |A| \,.
\end{equation}
\label{t:R_Minkovski}
\end{theorem}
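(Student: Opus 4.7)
The plan is to rewrite the defining equation of $\T(A)$ as a collinearity condition on the grid $A\times A$ and then invoke the Szemer\'edi--Trotter theorem discussed in the next section.

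The algebraic step is to expand $(a-c)(b-d)=(a'-c)(b'-d)$ and cancel the common $cd$ to obtain the linear equation $(b-b')c+(a-a')d = ab-a'b'$ in $(c,d)$. For non-degenerate quadruples with $a\ne a'$ and $b\ne b'$ this is a non-trivial line $\ell$, and a direct substitution shows that $\ell$ passes through the two points $(a,b')$ and $(a',b)$ of $A\times A$, which have distinct $x$- and $y$-coordinates. The map $(a,a',b,b')\longleftrightarrow \bigl((a,b'),(a',b)\bigr)$ is a bijection between such quadruples and ordered pairs $(P,Q)\in (A\times A)^2$ with $P_x\ne Q_x$ and $P_y\ne Q_y$. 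Thus in the non-degenerate regime each contributing sextuple to $\T(A)$ corresponds to an ordered triple of collinear points $(P,Q,R)$ in $A\times A$ with $R\in \ell_{PQ}$, so the non-degenerate part is dominated by the number of ordered collinear triples in the grid.

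Next I split off the degenerate contributions (when $a=a'$ or $b=b'$), which admit a crude $O(|A|^4)$ bound by a short case check: for instance if $a=a'$ and $b\ne b'$ the equation reduces to $(b-b')(c-a)=0$, forcing $c=a$ with $d$ free, so this case contributes at most $|A|\cdot|A|^2\cdot|A|=|A|^4$; the case $b=b'$ is symmetric, and the fully trivial case $a=a'$, $b=b'$ also contributes $|A|^4$.

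It remains to bound the number of ordered collinear triples in $A\times A$ by $O(|A|^4\log|A|)$. The key observation is that any line meets the grid in at most $|A|$ points, since a non-vertical line intersects each of the $|A|$ vertical lines $\{x=a\}$, $a\in A$, in at most one point. With $n=|A|^2$ and $k_\ell=|\ell\cap(A\times A)|$, the Szemer\'edi--Trotter bound $|\{\ell:k_\ell\ge k\}|\ll n^2/k^3+n/k$ combined with the layer-cake inequality $\sum_\ell k_\ell^3 \ll \sum_{k=2}^{|A|} k^2\,|\{\ell:k_\ell\ge k\}|$ yields $\sum_\ell k_\ell^3 \ll n^2\log n + n\cdot |A|^2 \ll |A|^4\log|A|$. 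The truncation $k\le n^{1/2}=|A|$ is crucial: without the bound on the maximum number of collinear points in $A\times A$, the second term would only produce the trivial $O(n^3)$ estimate. The main obstacle in this plan is identifying the correct re-parametrization that turns $\T(A)$ into a grid-collinearity count; once that is in place, the incidence estimate and the case analysis of degenerate quadruples are routine.
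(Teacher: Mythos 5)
Your proof is correct: the reparametrization $(b-b')c+(a-a')d=ab-a'b'$ with the line through $(a,b')$ and $(a',b)$, the $O(|A|^4)$ treatment of the degenerate cases, and the dyadic summation of the Szemer\'edi--Trotter rich-lines bound with the truncation $k_\ell\le|A|$ together give exactly $\T(A)\ll|A|^4\log|A|$. Note that the paper does not prove this theorem itself but cites Jones and Roche--Newton, and your argument -- interpreting $\T(A)$ as a count of collinear triples in the grid $A\times A$ (the interpretation the paper itself signals with ``$\T$ standing for collinear triples'') and applying Theorem \ref{t:SzT} -- is essentially the standard proof from those references.
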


\begin{theorem}
    Let $p$ be a prime number and let $A\subset \F_p$ be a subset with $|A|< p^{2/3}$.
    Then
$$
    \T(A) \ll |A|^{9/2} \,.
$$
\label{t:T(A)_Fp}
\end{theorem}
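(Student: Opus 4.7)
The approach is to recast $\T(A)$ as a point--plane incidence count in $\F_p^3$ and apply Rudnev's point--plane incidence theorem. Expanding the defining identity $(a-c)(b-d)=(a'-c)(b'-d)$ and cancelling the common $cd$ term rearranges it as
\begin{equation*}
(a-c)\,b\;-\;(a'-c)\,b'\;+\;(a'-a)\,d\;=\;0,
\end{equation*}
which is linear in the triple $(b,b',d)$ (and, symmetrically, in $(a,a',c)$). So each $(a,a',c)\in A^3$ determines a plane $\pi_{a,a',c}\subset \F_p^3$ through the origin, and $\T(A)$ equals the number of incidences between the point set $P:=A\times A\times A$ and the multiset of planes $\Pi:=\{\pi_{a,a',c}:(a,a',c)\in A^3\}$, plus a diagonal contribution from the triples with $a=a'=c$ (for which the equation degenerates to $0=0$). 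This diagonal is bounded by $|A|\cdot|A|^3=|A|^4\ll |A|^{9/2}$, so one can focus on the genuine incidence count.

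For the main step I would apply Rudnev's theorem in its usual form: if $P,\Pi\subset\F_p^3$ are points and planes with $|P|\le|\Pi|$, $|P|\le p^2$, and at most $k$ collinear points of $P$ on any line, then
\begin{equation*}
I(P,\Pi)\ll \frac{|P||\Pi|}{p}\;+\;|P|^{1/2}|\Pi|\;+\;k|\Pi|.
\end{equation*}
With $|P|=|\Pi|=|A|^3$ the hypothesis $|P|\le p^2$ becomes exactly $|A|\le p^{2/3}$, matching the assumption of the theorem. A direct projection argument -- a line with nonzero direction $(v_1,v_2,v_3)$ constrains the line parameter $t$ via at least one equation of the form $tv_i+(\text{const})\in A$ with $v_i\ne 0$ -- gives $k\le|A|$. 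Plugging in, the three terms read $|A|^6/p\le|A|^{9/2}$, $|A|^{3/2}\cdot|A|^3=|A|^{9/2}$, and $|A|\cdot|A|^3=|A|^4$, each of which is $\ll|A|^{9/2}$; note that $|A|\le p^{2/3}$ is precisely the largest range in which the ``uniform'' term $|P||\Pi|/p$ does not dominate, which explains the hypothesis.

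The one bookkeeping subtlety -- and the most likely place to go wrong -- is that the map $(a,a',c)\mapsto \pi_{a,a',c}$ need not be injective, so $\Pi$ must be treated as a multiset. This is harmless: Rudnev's bound applies under this convention with $|\Pi|$ counted with multiplicity, and repeated planes simply contribute multiple copies of their incidence counts. One might also worry about the fact that every plane $\pi_{a,a',c}$ passes through the origin, but this forces at most one extra incidence per plane (namely $(0,0,0)$, and only if $0\in A$), contributing an additional $|A|^3\ll|A|^{9/2}$. Beyond these minor matters, the whole argument is driven by the single linear rewriting above; once that is in hand the conclusion is essentially a mechanical application of Rudnev's theorem.
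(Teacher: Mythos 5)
The paper itself does not prove Theorem \ref{t:T(A)_Fp}: it is imported from the literature (the collinear--triples bound of \cite{AMRS}; see also \cite{Solodkova_S}), and the route you take --- viewing $\T(A)$ as a point--plane incidence count in $\F_p^3$ and applying Rudnev's theorem --- is indeed the method of the cited source. Your reduction is set up correctly: the rewriting $(a-c)b+(c-a')b'+(a'-a)d=0$, the $|A|^4$ contribution of the degenerate triples $a=a'=c$, the collinearity bound $k\le |A|$ for $A\times A\times A$, and the identification of $|P|\le p^2$ with $|A|\le p^{2/3}$ are all fine.

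The gap is exactly the step you dismiss as bookkeeping. Rudnev's theorem is a statement about a \emph{set} of planes, and the claim that it "applies under this convention with $|\Pi|$ counted with multiplicity" is false: take one plane repeated $N$ times and let $P$ consist of $n\le N$ points on that plane with no three collinear (points on a conic, say), so $k=2$; then the number of weighted incidences is $nN$, while the claimed bound is $O\bigl(N\sqrt{n}+kN\bigr)$, which fails badly. This failure mode is not hypothetical in your application, because $\pi_{a,a',c}$ depends only on the projective direction $[(a-c):(c-a'):(a'-a)]$, and multiplicities of order $|A|^2$ really occur: all triples with $a=a'$ give the single plane $b=b'$, and for $A$ an arithmetic progression the direction $(1,1,-2)$ is attained by $\sim |A|^2$ triples $(a,a',c)=(a,a-2\lambda,a-\lambda)$. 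So the theorem cannot be obtained by a verbatim application of Rudnev's bound to the multiset; one must genuinely control the weights, e.g.\ by dyadic decomposition over the multiplicity together with a separate treatment of the popular planes. That is delicate here: when the number of distinct rich planes drops below $|P|$ one would like to swap the roles of points and planes, but the dual form of the incidence theorem is blocked by the fact that every plane of your family contains the line $\{(t,t,t):t\in\F_p\}$, so arbitrarily many of them share a common line. This weighted/popular-plane analysis is where the substance of the quoted proof lies beyond the "mechanical application", and it is missing from your argument as written.
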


\bigskip

The method of our  paper
relies on the famous Szemer\'edi--Trotter Theorem
\cite{sz-t}, see also \cite{TV}.
Let us recall the relevant definitions.

We call a set $\mathcal{L}$ of continuous
plane curves a {\it pseudo-line system} if any two members of $\mathcal{L}$
have at most one point in common.
Define the {\it number of incidences} $\mathcal{I} (\mathcal{P},\mathcal{L})$ between points and pseudo--lines  to be
$\mathcal{I}(\mathcal{P},\mathcal{L})=|\{(p,l)\in \mathcal{P}\times \mathcal{L} : p\in l\}|$.

\begin{theorem}\label{t:SzT}
Let $\mathcal{P}$ be a set of points and let $\mathcal{L}$ be a pseudo-line system.
Then
$$\mathcal{I}(\mathcal{P},\mathcal{L}) \ll |\mathcal{P}|^{2/3}|\mathcal{L}|^{2/3}+|\mathcal{P}|+|\mathcal{L}|\,.$$
\end{theorem}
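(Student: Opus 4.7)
The plan is to prove this via Székely's crossing-number argument. First I would build a drawn graph $G$ on vertex set $\mathcal{P}$ by the rule that, for each pseudo-line $\ell \in \mathcal{L}$ meeting $k_\ell \ge 1$ points of $\mathcal{P}$, one inserts $k_\ell - 1$ edges along $\ell$ joining the successive incidence points in the order they appear along the curve. The edge count satisfies
\begin{equation*}
    e(G) \;=\; \sum_{\ell \in \mathcal{L},\, k_\ell \ge 1} (k_\ell - 1) \;\ge\; \mathcal{I}(\mathcal{P},\mathcal{L}) - |\mathcal{L}|,
\end{equation*}
while the pseudo-line axiom that any two members of $\mathcal{L}$ share at most one point forces any two distinct points of $\mathcal{P}$ to lie on at most one common pseudo-line, so $G$ is a simple graph on $n = |\mathcal{P}|$ vertices.

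Next I would bound the crossing number of this drawing. Any crossing of two edges of $G$ is in particular an intersection of the two pseudo-lines carrying those edges, so the same axiom yields $\mathrm{cr}(G) \le \binom{|\mathcal{L}|}{2} < |\mathcal{L}|^2/2$. I then invoke the crossing lemma, which asserts that any simple graph with $e \ge 4n$ edges satisfies $\mathrm{cr}(G) \gg e^3/n^2$. Either $e(G) < 4|\mathcal{P}|$, in which case $\mathcal{I}(\mathcal{P},\mathcal{L}) \le e(G) + |\mathcal{L}| \ll |\mathcal{P}| + |\mathcal{L}|$; or the crossing lemma applies, so $e(G)^3/|\mathcal{P}|^2 \ll |\mathcal{L}|^2$, yielding $e(G) \ll |\mathcal{P}|^{2/3}|\mathcal{L}|^{2/3}$ and hence $\mathcal{I}(\mathcal{P},\mathcal{L}) \ll |\mathcal{P}|^{2/3}|\mathcal{L}|^{2/3} + |\mathcal{L}|$. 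Combining the two cases yields the claimed bound.

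The main obstacle along this route is the crossing lemma itself, which is an independent result traditionally proved by a probabilistic Euler-formula argument and which I would simply cite. The delicate step in the reduction is the passage from straight lines to pseudo-lines: one must verify both that edges lying on the same pseudo-line never cross each other (true because they are drawn along the same curve between successive points) and that $G$ is genuinely simple (true by the pseudo-line axiom, as noted above). Once these two points are in hand, the remaining manipulation is entirely routine.
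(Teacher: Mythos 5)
This theorem is not proved in the paper at all: it is quoted as a known result, with references to Szemer\'edi--Trotter \cite{sz-t} and to \cite{TV}, so there is no internal proof to compare against. Your crossing-number argument is correct and is essentially Sz\'ekely's proof, which is also the proof given in the cited Tao--Vu reference; the two points specific to pseudo-lines are handled properly (the graph is simple because two distinct points lying on two distinct curves would force those curves to share two points, and the drawing has at most $\binom{|\mathcal{L}|}{2}$ crossings because each pair of curves meets at most once, hence contributes at most one edge crossing). The only implicit assumption worth flagging is that each curve is a simple arc, so that its incidence points are linearly ordered and ``successive points along the curve'' makes sense; for closed or non-simple curves one needs a trivial modification (e.g.\ deleting one further edge per curve), in the spirit of the paper's remark that the theorem persists for more general curve systems. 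With the Crossing Lemma cited as an external ingredient, the argument is complete.
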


\bigskip

\begin{remark}
    If we redefine a pseudo-line system as a  family of continuous plane curves with $O(1)$ points in common
    and if any two points are simultaneously incident to at most $O(1)$ curves,
    then Theorem \ref{t:SzT} remains true: see e.g. \cite[Theorem 8.10]{TV} for precise bounds in this
    direction.
\end{remark}

\bigskip

Now let us recall the  main result of \cite{sv}.

\begin{theorem}
    Let $\G\subseteq \f_p$ be a multiplicative subgroup,
    let $k\ge 1$ be a positive integer, and let $x_1,\dots,x_k$ be different nonzero elements of $\f_p$.
    Also, let
    $$
        32 k 2^{20k \log (k+1)} \le |\G|\,, \quad  p \ge 4k |\G|  ( |\G|^{\frac{1}{2k+1}} + 1 ) \,.
    $$
    Then
    \begin{equation}\label{f:main_many_shifts}
        |\G\bigcap (\G+x_1) \bigcap \dots (\G+x_k)| \le 4 (k+1) (|\G|^{\frac{1}{2k+1}} + 1)^{k+1} \,.
    \end{equation}
    The same holds if one replaces  $\G$ in (\ref{f:main_many_shifts}) by any cosets of $\G$.
\label{t:main_many_shifts}
\end{theorem}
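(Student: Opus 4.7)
The plan is to establish Theorem \ref{t:main_many_shifts} via the Stepanov polynomial method, adapted to handle simultaneous shifts of a multiplicative subgroup. Set $X := \G \cap (\G+x_1) \cap \dots \cap (\G+x_k)$ and $N := |X|$. The algebraic fact to exploit is that for each $y\in X$, the $k+1$ elements $y, y-x_1, \dots, y-x_k$ all lie in $\G$, and hence each satisfies $g^{|\G|} = 1$ in $\F_p$ (noting that $|\G|$ divides $p-1$, so $|\G|$ is invertible mod $p$).

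First, I would construct an auxiliary polynomial $F(z) \in \F_p[z]$ of the form
$$
F(z) = \sum_{\vec a,\, \vec b} c_{\vec a, \vec b}\, P_{\vec a}(z)^{|\G|}\, Q_{\vec b}(z),
$$
where $P_{\vec a}(z) = z^{a_0}\prod_{i=1}^{k}(z-x_i)^{a_i}$ and $Q_{\vec b}(z) = z^{b_0}\prod_{i=1}^{k}(z-x_i)^{b_i}$, with $\vec a \in \{0,\dots,t\}^{k+1}$ and $\vec b \in \{0,\dots,m\}^{k+1}$ for parameters $t \asymp |\G|^{1/(2k+1)}$ and $m$ to be optimized. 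The crucial point is that for $y \in X$ one has $P_{\vec a}(y)^{|\G|} = 1$, so $F(y) = \sum_{\vec a,\vec b}c_{\vec a,\vec b} Q_{\vec b}(y)$ is linear in the unknowns; by the Leibniz rule and the identity $(d/dz)P^{|\G|}\bigr|_{z=y} = |\G| P(y)^{-1}P'(y)$, every derivative $F^{(j)}(y)$ for $0 \leq j < m$ also reduces to a linear functional of the $c_{\vec a,\vec b}$ whose coefficients depend only on $y$ and the $x_i$.

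Imposing $F^{(j)}(y) = 0$ for every $y \in X$ and $0 \leq j < m$ gives at most $Nm$ linear constraints on the $(t+1)^{k+1}(m+1)^{k+1}$ unknowns $c_{\vec a,\vec b}$; choosing $m$ so that the latter strictly exceeds the former yields a nonzero admissible $F$ by linear algebra. Since $\deg F \leq (k+1)t|\G| + (k+1)m$ and $F$ vanishes to order $\geq m$ at the $N$ distinct points of $X$, we deduce $Nm \leq \deg F$, and balancing with $m \asymp t^k|\G|$ produces $N \ll (k+1)t^{k+1} \ll (k+1)(|\G|^{1/(2k+1)}+1)^{k+1}$, which is the claimed bound. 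For a coset $\lambda\G$ the only modification is that $g^{|\G|} = \lambda^{|\G|}$ is a fixed nonzero scalar rather than $1$; absorbing the scalar $\lambda^{|\G|(a_0 + \cdots + a_k)}$ into $c_{\vec a,\vec b}$, the entire argument carries through unchanged.

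The main obstacle is verifying that a nonzero $F$ of the prescribed form can in fact be chosen, i.e.\ that the natural map $(c_{\vec a,\vec b}) \mapsto F$ from coefficient space to $\F_p[z]$ has sufficiently small kernel. This requires a careful analysis of the leading monomials of the $P_{\vec a}(z)^{|\G|}Q_{\vec b}(z)$, showing that distinct multi-indices produce polynomials whose top-degree coefficients cannot collide; the quantitative hypotheses $|\G| \geq 32k\cdot 2^{20k\log(k+1)}$ and $p \geq 4k|\G|(|\G|^{1/(2k+1)}+1)$ are calibrated precisely to rule out such collisions and to prevent degree wrap-around modulo $p-1$. The remaining bookkeeping -- tracking the explicit constant $4(k+1)$ in the final bound -- is a routine optimization of the two parameters $t$ and $m$.
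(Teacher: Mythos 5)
This theorem is not proved in the paper at all: it is quoted verbatim as the main result of \cite{sv}, so there is no internal proof to compare against, and any argument offered here has to stand on its own. Your outline does follow the right general strategy (Stepanov's method with auxiliary polynomials built from the blocks $z^{a_0|\G|}\prod_i(z-x_i)^{a_i|\G|}$ is indeed how the result is proved in \cite{sv}, and for $k=1$ by Heath--Brown and Konyagin), but it has a genuine gap at precisely the point you label ``the main obstacle'': you never establish that the linear-algebra step yields a polynomial $F$ that is nonzero \emph{as a polynomial}. Comparing the number of unknowns $(t+1)^{k+1}(m+1)^{k+1}$ with the number of conditions $Nm$ only produces a nonzero coefficient vector $(c_{\vec a,\vec b})$; since your generating family $P_{\vec a}(z)^{|\G|}Q_{\vec b}(z)$ has roughly $(tm)^{k+1}$ members while the space of polynomials of degree at most $(k+1)(t|\G|+m)$ has dimension only about $(k+1)t|\G|$ (already for $k=1$ one has $(tm)^{2}\approx|\G|^{2}$ against $t|\G|\approx|\G|^{4/3}$ with your parameters), the map from coefficient space to $\F_p[z]$ has an enormous kernel. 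Consequently your proposed fix --- showing that ``distinct multi-indices produce polynomials whose top-degree coefficients cannot collide'' --- cannot succeed: degree collisions among the generators are unavoidable for cardinality reasons. The genuine proofs restrict $F$ to a much more rigid shape (low-degree coefficient polynomials attached to the blocks $z^{a_0|\G|}\prod_i(z-x_i)^{a_i|\G|}$) and then prove a separate linear-independence lemma for these blocks over low-degree polynomials (a Wronskian-type argument); this is the technical heart of the matter and is not routine bookkeeping. The hypotheses on $|\G|$ and $p$ are used there and to keep the derivative orders and degrees compatible with characteristic $p$ (derivatives of order $\ge p$ vanish identically, so one needs $m<p$ and $\deg F$ controlled), not to prevent ``degree wrap-around modulo $p-1$''.

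There is also a quantitative slip in the balancing: you take $t\asymp|\G|^{1/(2k+1)}$ and ``$m\asymp t^{k}|\G|$'', but the correct choice is $m\asymp|\G|t^{-k}\asymp|\G|^{(k+1)/(2k+1)}$. With your stated $m$ one has $m\ge t|\G|$, so $\deg F\approx(k+1)m$ and the inequality $Nm\le\deg F$ would return $N\lesssim k+1$, an absurdly strong conclusion that signals the parameters are inconsistent; with the corrected $m$ the computation $N\le\deg F/m\approx(k+1)t|\G|/m\asymp(k+1)|\G|^{(k+1)/(2k+1)}$ does give the claimed shape of the bound. The coset reduction at the end (absorbing the fixed scalar $\lambda^{|\G|(a_0+\cdots+a_k)}$ into the coefficients) is fine, but as it stands the proposal is a plan whose decisive step --- non-vanishing of the auxiliary polynomial --- is missing and whose suggested route to it would fail.
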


Thus,
the theorem above
asserts that
$|\G\bigcap (\G+x_1) \bigcap \dots (\G+x_k)| \ll_k |\G|^{\frac{1}{2}+\alpha_k}$,
provided that
$1 \ll_k |\G| \ll_k p^{1-\beta_k}$,
where $\alpha_k, \beta_k$ are some sequences of positive numbers, and $\alpha_k, \beta_k \to 0$ as $k\to \infty$.

\bigskip

We finish this  section with a result from \cite{Solodkova_S}, see Lemma 19.

\begin{lemma}
    Let $\G \subseteq \f_p$ be a multiplicative subgroup, and $k$ be a positive integer.
    Then for any nonzero distinct elements $x_1,\dots,x_k$ of $\f_p$ one has
\begin{equation}\label{f:C_for_subgroups}
    |\G \bigcap (\G+x_1) \bigcap \dots \bigcap (\G+x_k)| = \frac{|\G|^{k+1}}{(p-1)^k} + \theta k 2^{k+3} \sqrt{p} \,,
\end{equation}
    where $|\theta| \le 1$.
\label{l:C_for_subgroups}
\end{lemma}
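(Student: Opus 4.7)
The plan is to count the intersection via multiplicative character orthogonality and to estimate the resulting character sums by the Weil bound. Writing $\G^\perp$ for the subgroup of multiplicative characters of $\F_p^*$ that are trivial on $\G$ (of order $t := (p-1)/|\G|$), the indicator function decomposes as
$$\mathbf{1}_\G(y) = \frac{|\G|}{p-1} \sum_{\chi \in \G^\perp} \chi(y), \qquad y \in \F_p^*.$$
Up to an $O(k)$ error from those $y$ for which some $y - x_j = 0$, the quantity to estimate equals $\sum_y \mathbf{1}_\G(y) \prod_{j=1}^k \mathbf{1}_\G(y - x_j)$ with $y$ ranging over $\F_p^*$.

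Substituting the character expansion and swapping sums, we obtain
$$\l( \frac{|\G|}{p-1} \r)^{k+1} \sum_{(\chi_0, \ldots, \chi_k) \in (\G^\perp)^{k+1}} S(\chi_0, \ldots, \chi_k) \;+\; O(k),$$
where $S(\chi_0, \ldots, \chi_k) := \sum_{y \in \F_p} \chi_0(y) \prod_{j=1}^k \chi_j(y - x_j)$. The all-trivial tuple contributes $S = p - 1$, producing together with the prefactor the main term $|\G|^{k+1}/(p-1)^k$. Every other tuple must be controlled via Weil.

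For a non-trivial tuple, write $\chi_j = \omega^{n_j}$ for a fixed generator $\omega$ of $\widehat{\F_p^*}$; the constraint $\chi_j \in \G^\perp$ forces $|\G| \mid n_j$. Then $S$ becomes the character sum $\sum_y \omega(f(y))$ for $f(Y) := Y^{n_0} \prod_{j=1}^k (Y - x_j)^{n_j}$. Since the points $0, x_1, \ldots, x_k$ are pairwise distinct and each $n_j \in \{0, |\G|, 2|\G|, \ldots, (t-1)|\G|\}$, the polynomial $f$ is a $(p-1)$-th power in $\F_p[Y]$ exactly when every $n_j = 0$, that is, only for the all-trivial tuple. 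Weil's bound (in the form depending on the number of distinct roots of $f$) therefore yields $|S(\chi_0, \ldots, \chi_k)| \le k \sqrt{p}$, with any explicit constant absorbed into the factor $2^{k+3}$ below.

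Summing over the $t^{k+1} - 1$ non-trivial tuples and using the cancellation $(|\G|/(p-1))^{k+1} \cdot t^{k+1} = 1$, the total error from the non-trivial character tuples is at most a constant multiple of $k \sqrt{p}$, which together with the $O(k)$ support correction fits inside the claimed bound $\theta k 2^{k+3} \sqrt{p}$. The main technical obstacle is verifying the non-degeneracy hypothesis of Weil's bound for \emph{every} non-trivial tuple; this is precisely where the distinctness of $x_1, \ldots, x_k$ (together with the point $0$) and the divisibility $|\G| \mid n_j$ enforced by $\chi_j \in \G^\perp$ enter critically. The remaining work is routine book-keeping of constants to land on the explicit coefficient $2^{k+3}$.
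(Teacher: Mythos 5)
Your argument is correct and is essentially the standard proof of this statement: the paper itself does not prove the lemma but imports it (Lemma 19 of the cited reference \cite{Solodkova_S}), where the same orthogonality-plus-Weil argument is used — expand the indicator of $\G$ over the characters trivial on $\G$, extract the main term $|\G|^{k+1}/(p-1)^k$ from the all-trivial tuple, and bound each remaining complete character sum by $k\sqrt{p}$ via Weil, the distinctness of $0,x_1,\dots,x_k$ guaranteeing non-degeneracy. (A minor remark: the divisibility $|\G|\mid n_j$ is not actually needed for the Weil step — non-triviality of the tuple and distinctness of the roots already suffice — but this does not affect correctness.)
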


\section{Some consequences of the Szemer\'{e}di--Trotter Theorem}
\label{sec:SzT}


In this  section
we discuss some
implications
of
Szemer\'{e}di--Trotter Theorem \ref{t:SzT}, which are given in a modern form
(see e.g. \cite{RSS}).
We start with a definition from \cite{s_sumsets}.


\begin{definition}
A finite set $A \subset \R$ is said to be of \emph{Szemer\'{e}di--Trotter type}
(abbreviated as \emph{SzT--type}) if there exists a parameter $D(A)>0$ such that inequality
\begin{equation}\label{f:SzT-type}
\bigl|\bigl\{ s\in A-B ~\mid~ |A\cap (B+s)| \ge \tau \big\}\big|
    \le
        \frac{D (A) |A| |B|^2}{\tau^{3}}\,,
\end{equation}
holds
 for every finite set $B\subset \R$ and every real number $\tau \ge 1$.
\label{def:SzT-type}
\end{definition}

 So, $D(A)$ can be considered as the infimum of numbers such that (\ref{f:SzT-type})
 takes place
 for any $B$ and $\tau \ge 1$
 but, of course, the definition is applicable just for sets $A$ with  small quantity $D(A)$.

\bigskip

Now we can introduce a new characteristic of a set $A\subset \R$, which
can be considered as a generalization
of $D(A)$.

\begin{definition}
Let $\Phi : \R^2 \to \R$ be any function.
A finite set $A \subset \R$ is said to be of \emph{Szemer\'{e}di--Trotter type relative to $\Phi$}
(abbreviated as \emph{SzT$_\Phi$--type}) if there exists a parameter $D_\Phi (A)>0$ such that inequality
\begin{equation}\label{f:SzT-type'}
\bigl|\bigl\{ x  ~:~   | \{ (a,b) \in A \times B,\, a =\Phi(b,x) \} | \ge \tau \big\}\big|
    \le
        \frac{D_\Phi (A) |A| |B|^2}{\tau^{3}}\,,
\end{equation}
holds
 for every finite set $B\subset \R$ and every real number $\tau \ge 1$.
\label{def:SzT-type'}
\end{definition}

\bigskip

The usual quantity $D(A)$ corresponds to the case $\Phi(x,y) = x+y$.

\bigskip

Any SzT--type set has small number of solutions of a wide class of linear equations, see, e.g.,  \cite[Corollary 8]{KS_smd} (where nevertheless another quantity $D(A)$ was used)
and
 \cite[Lemmas 7, 8]{s_sumsets}, say.
This is another illustration of that  fact in a more general context.

\begin{corollary}
    Let $A\subset \R$ be a finite set of SzT$_\Phi$--type.
    For any finite $B\subset \R$ let
$$
    \a_{A,B} (x) = \a^{\Phi}_{A,B} (x) := |\{ (a,b) \in A\times B ~:~ a =\Phi(b,x) \}| \,.
$$
    Then
\begin{equation}\label{f:Phi_cor}
    | \{ x ~:~ \a_{A,B} (x) > 0 \}| \gg |A| |B|^{1/2} D_{\Phi}^{-1/2} (A) \,,
\end{equation}
    and
\begin{equation}\label{f:Phi_cor2}
    \E^{\Phi}_2 (A,B) := \sum_x \a^2_{A,B} (x) \ll D^{1/2}_\Phi (A) \cdot |A| |B|^{3/2} \,.
\end{equation}
\label{c:Phi_cor}
\end{corollary}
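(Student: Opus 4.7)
The plan is to first establish the energy bound (\ref{f:Phi_cor2}), from which the support bound (\ref{f:Phi_cor}) will drop out via Cauchy--Schwarz. Write $\alpha(x) = \alpha^{\Phi}_{A,B}(x)$ throughout. The starting observation is that $\sum_x \alpha(x) \asymp |A||B|$: for each pair $(a,b) \in A \times B$, the fibre $\{x : \Phi(b,x) = a\}$ contains $O(1)$ points (this is the pseudo-line-type hypothesis implicit in the SzT$_\Phi$ setup, and it is a strict equality in the model case $\Phi(x,y) = x+y$, where $x = a-b$ is unique).

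For (\ref{f:Phi_cor2}), I would perform a dyadic decomposition. Set $P_j = \{x : 2^j \leq \alpha(x) < 2^{j+1}\}$ for $j \geq 0$. Two upper bounds on $|P_j|$ are available: the \emph{trivial} bound $|P_j| \leq 2^{-j} \sum_x \alpha(x) \ll |A||B| \cdot 2^{-j}$, coming from the previous observation, and the \emph{SzT}$_\Phi$ bound $|P_j| \leq D_\Phi(A)|A||B|^2 \cdot 2^{-3j}$, which is just (\ref{f:SzT-type'}) at threshold $\tau = 2^j$. These two estimates cross at $2^{j_0} \asymp (D_\Phi(A)|B|)^{1/2}$. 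Splitting $\E_2^{\Phi}(A,B) \leq 4\sum_{j \geq 0} 2^{2j}|P_j|$ at $j = j_0$, using the trivial bound for $j \leq j_0$ (a geometric series dominated by $2^{j_0}|A||B|$) and the SzT$_\Phi$ bound for $j > j_0$ (a geometric series dominated by $2^{-j_0} D_\Phi(A)|A||B|^2$), each half is $\ll D_\Phi(A)^{1/2}|A||B|^{3/2}$, yielding (\ref{f:Phi_cor2}).

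For (\ref{f:Phi_cor}), Cauchy--Schwarz applied to $\alpha$ and the indicator of its support gives
\[
    (|A||B|)^2 \asymp \Bigl(\sum_x \alpha(x)\Bigr)^2 \leq |\{x : \alpha(x) > 0\}| \cdot \sum_x \alpha^2(x) = |\{x : \alpha(x) > 0\}| \cdot \E_2^{\Phi}(A,B),
\]
so substituting in the energy bound just proved yields $|\{x : \alpha(x) > 0\}| \gg |A||B|^{1/2} D_\Phi(A)^{-1/2}$, which is (\ref{f:Phi_cor}). The only genuinely nontrivial input is the SzT$_\Phi$ hypothesis itself; everything else is bookkeeping of the dyadic split followed by Cauchy--Schwarz, so I do not anticipate a real obstacle beyond ensuring the pseudo-line-type assumption on $\Phi$ (bounding the size of each fibre $\{x : \Phi(b,x) = a\}$) is in force, so that $\sum_x \alpha(x) \asymp |A||B|$.
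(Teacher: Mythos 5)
Your argument is correct, and it is essentially the route the paper itself points to as an alternative: the paper's displayed proof establishes (\ref{f:Phi_cor}) directly, by ordering $\a_{A,B}(x_1)\ge \a_{A,B}(x_2)\ge\dots$, using (\ref{f:SzT-type'}) to get $\a_{A,B}(x_j)\ll (D_\Phi(A)|A||B|^2)^{1/3}j^{-1/3}$, and summing until the total mass $|A||B|$ is exhausted; it then remarks that (\ref{f:Phi_cor2}) ``can be obtained similarly'' and that (\ref{f:Phi_cor2}) together with Cauchy--Schwarz recovers (\ref{f:Phi_cor}). Your dyadic decomposition into the sets $P_j$, with crossover at $2^{j_0}\asymp (D_\Phi(A)|B|)^{1/2}$, is exactly the level-set form of that rearrangement computation, so the two proofs use identical inputs; yours merely runs energy-first and then Cauchy--Schwarz, which is the order the paper mentions in its closing sentence rather than the one it writes out. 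One point worth keeping explicit: both proofs need the normalization $\sum_x \a_{A,B}(x)\asymp |A||B|$ (the upper bound for your trivial estimate on $|P_j|$, the lower bound for the Cauchy--Schwarz step), and this does not follow from Definition \ref{def:SzT-type'} alone; it requires that for each pair $(a,b)\in A\times B$ the equation $a=\Phi(b,x)$ have at least one and at most $O(1)$ solutions in $x$. The paper silently uses the stronger identity $|A||B|=\sum_{x}\a_{A,B}(x)$, which holds in all of its applications (there $\Phi(b,x)=bx$ with $b\neq 0$), so the hypothesis you flag is the same one the paper relies on and does not constitute a gap in your argument.
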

\begin{proof}
Let $D=D_{\Phi} (A)$.
Put $S = \{ x ~:~ \a_{A,B} (x) > 0 \}$ and $\tau = |A| |B| /(2|S|)$.
Then
$$
    |A| |B| = \sum_{x\in S} \a_{A,B} (x) \le 2  \sum_{x\in S'} \a_{A,B} (x) \,,
$$
    where $S' = \{ x\in S ~:~ \a_{A,B} (x) \ge \tau \}$.
    By our assumption $A$ is a set of SzT$_\Phi$--type.
    Thus, arranging $\a_{A,B} (x_1) \ge \a_{A,B} (x_2) \ge \dots $, we get
    $\a_{A,B} (x_j) \le (D |A| |B|^2)^{1/3} j^{-1/3}$.
    Whence
$$
    |A| |B| \ll \sum_{j=1}^T (D |A| |B|^2)^{1/3} j^{-1/3} \ll (D |A| |B|^2)^{1/3} T^{2/3} \,,
$$
    where $T^{1/3} = (D |A| |B|^2)^{1/3} \tau^{-1}$.
    It follows that
$$
    |A| |B| \ll D |A| |B|^2 \tau^{-2} \ll D |A|^{-1} |S|^2
$$
    as required.
    Estimate (\ref{f:Phi_cor2}) can be obtained similarly, or see the proof of Lemma 7 in  \cite{s_sumsets}.
    Also notice  that this  bound, combined  with the Cauchy--Schwarz inequality, implies (\ref{f:Phi_cor}).
This concludes the proof.
$\hfill\Box$
\end{proof}

\bigskip

 In \cite{RR-NS} authors considered the quantity
\begin{equation}\label{f:RN}
    \tilde{d}_{+} (A) = \inf_{f} \min_{C} \frac{|f(A) + C|^2}{|A| |C|} \,,
\end{equation}
    where the  infimum is taken over convex/concave functions $f$
    and proved that $D_+ (A) \ll \tilde{d} (A)$.
    In a similar way, one can consider the quantity $D_\times (A)$, which corresponds to $\Phi(x,y) = xy$, and obtain
\begin{equation}\label{f:S_t}
    D_\times (A) \ll \inf_f \min_{C} \frac{|f(A)+C|^2}{|A||C|} \,,
\end{equation}
    where the infimum is taken over all functions $f$ such that for any $b\neq b'$ the function
    $f(xb) - f(xb')$ is monotone.
    The last bound is a generalization of \cite[Lemma 15]{J_R-N}.
    For the rigorous  proof of formula (\ref{f:S_t}) and
    the proof of
    a similar upper bound for the quantity $D_\Phi (A)$, see the appendix.

\section{The proof of the main result}
\label{sec:proof}

First of all let us derive a simple consequence of Theorem \ref{t:SzT}.

\begin{lemma}
    Let $A,B,C,D\subset \R$ be four finite sets.
    Then for any nonzero $\alpha$ one has
\begin{equation}\label{f:2/3}
    |A \cap (B+\a)| \ll (|C| |D|)^{-1/3}
        (|AC||BD|)^{2/3} + |D|^{-1} |BD| +|C|^{-1} |AC|\,.
\end{equation}
    In a dual way
\begin{equation}\label{f:2/3'}
    |A \cap (\a/B) | \ll (|C| |D|)^{-1/3}
        (|A+C||B+D|)^{2/3} + |D|^{-1} |B+D| +|C|^{-1} |A+C|\,.
\end{equation}
\label{l:2/3}
\end{lemma}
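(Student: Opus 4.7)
The plan is to cast the quantity $N := |A \cap (B+\alpha)|$ as a count of rich lines and apply the Szemerédi--Trotter Theorem. For each $(c,d) \in C \times D$ with $c \neq 0$ (we may discard the at most one pair involving $c=0$ or $d=0$ at the cost of an absolute constant), define the line
$$
\ell_{c,d} : \ y = (d/c)\, x - \alpha d .
$$
The observation driving everything is that for every $a \in A \cap (B+\alpha)$ the point $(ac,(a-\alpha)d)$ lies in $AC \times BD$ and satisfies $(a-\alpha)d = (d/c)(ac) - \alpha d$, i.e.\ sits on $\ell_{c,d}$. So each line is $N$-rich with respect to $\mathcal{P} := AC \times BD$. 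Crucially, because $\alpha \neq 0$ the slope $d/c$ together with the intercept $-\alpha d$ uniquely recovers $(c,d)$, so the family $\mathcal{L} := \{\ell_{c,d} : (c,d)\in C\times D\}$ consists of $|C||D|$ distinct lines.

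Next I would feed this into the $k$-rich line consequence of Theorem~\ref{t:SzT}: the number of lines containing at least $N$ points of $\mathcal{P}$ is $\ll |\mathcal{P}|^2/N^3 + |\mathcal{P}|/N$. Since $|\mathcal{L}| = |C||D|$ lines are all $N$-rich, this yields
$$
|C||D| \ll \frac{(|AC||BD|)^2}{N^3} + \frac{|AC||BD|}{N}.
$$
To isolate the main term I would split on the trivial bound: if $N \le |AC|/|C|$ or $N \le |BD|/|D|$, the inequality (\ref{f:2/3}) already holds via one of the last two summands. Otherwise, the elementary chain $N \le \min(|A|,|B|) \le \min(|AC|,|BD|)$ gives $N^2 \le |AC||BD|$, forcing the first summand of the Szemerédi--Trotter bound to dominate, so $|C||D| \ll (|AC||BD|)^2/N^3$, which rearranges to $N \ll (|AC||BD|)^{2/3}(|C||D|)^{-1/3}$. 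This exhausts the cases and proves (\ref{f:2/3}).

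For the dual estimate (\ref{f:2/3'}) I would repeat the argument with the roles of sum and product interchanged. Setting $N := |A \cap (\alpha/B)| = |\{(a,b)\in A\times B : ab = \alpha\}|$, I would replace $\ell_{c,d}$ by the hyperbolas $\mathcal{H}_{c,d} : (x-c)(y-d) = \alpha$, indexed by $(c,d)\in C\times D$. For each pair $(a,b)$ with $ab = \alpha$, the point $(a+c,b+d) \in (A+C)\times (B+D)$ satisfies $\mathcal{H}_{c,d}$, so every such hyperbola is $N$-rich with respect to the point set $(A+C)\times(B+D)$. Two distinct hyperbolas $\mathcal{H}_{c_1,d_1}$, $\mathcal{H}_{c_2,d_2}$ meet in at most two points (subtraction of the defining equations reduces to a linear relation in $x,y$, then one quadratic substitution), and by symmetry two fixed points lie on at most two hyperbolas in the family, so $\mathcal{H}$ is a pseudo-line system in the sense of the remark after Theorem~\ref{t:SzT}. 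Running the same case split gives (\ref{f:2/3'}).

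The main obstacle, in my view, is purely bookkeeping: verifying that the $|C||D|$ lines (or hyperbolas) are genuinely distinct and that the non-trivial trivial terms $|BD|/|D|$ and $|AC|/|C|$ swallow the second summand of the Szemerédi--Trotter output. The nonvanishing of $\alpha$ is used precisely here to prevent the collapse of the family, and the bound $N^2 \le |AC||BD|$ is what lets the first term always dominate. No deeper tool beyond Szemerédi--Trotter (in its $k$-rich line formulation) is needed.
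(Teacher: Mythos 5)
Your proof is correct and is essentially the paper's argument: both estimates come from one application of the Szemer\'edi--Trotter theorem to an incidence configuration built from the grids $AC\times BD$ (resp.\ $(A+C)\times(B+D)$), yours phrased via the $k$-rich-lines (curves) corollary with lines $y=(d/c)x-\alpha d$, resp.\ hyperbolas $(x-c)(y-d)=\alpha$, indexed by $C\times D$, while the paper counts incidences directly with points $C^{-1}\times BD$ and lines indexed by $AC\times D^{-1}$ (resp.\ points $C\times(B+D)$ and curves $(p-x)(y-d)=\alpha$), which is just a transposed parametrization of the same structure. The extra observation you need, $N\le\min(|A|,|B|)\le\min(|AC|,|BD|)$ after discarding $0$ from $C$ and $D$, is fine and in fact shows the main term dominates, so your case split is harmless bookkeeping.
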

\begin{proof}
We can remove the origin from $A,B,C,D$ if we want.
First, note  that
the number of solutions of the equation $a-b = \a$ with $a \in A$ and $b \in B$ is equal to $|A \cap (B+\a)|$.
Hence,
$$
    |A \cap (B+\a)|
        \ll
            (|C||D|)^{-1} |\{ pc^{-1} - p_* d^{-1} = \a ~:~ p \in AC,\, p_* \in BD,\, c \in C,\, d\in D\}| \,.
$$
Let  $\mathcal{P}$ be the set of points  $\{ (c^{-1}, p_*) ~:~ c \in C,\, p_* \in BD \}$  and let
$\mathcal{L}$ be the set of lines $\{ l_{s,t} \}$ where $s\in AC$, $t\in D^{-1}$
and $l_{s,t} = \{ (x,y) ~:~ sx - ty = \a \}$.
Clearly, $|\mathcal{P}| = |C| |BD|$ and $|\mathcal{L}| = |D| |AC|$.
Using Theorem \ref{t:SzT} with the set of points equal to $\mathcal{P}$ and the set of lines equal to $\mathcal{L}$, we get
$$
    |A \cap (B+\a)| \ll (|C||D|)^{-1} ( (|\mathcal{P}| |\mathcal{L}|)^{2/3} + |\mathcal{P}| + |\mathcal{L}|)
        \ll
$$
$$
        \ll
            (|C||D|)^{-1} ( (|C| |D| |AC| |BD| )^{2/3} + |C| |BD| + |D| |AC|)
                \ll
$$
$$
                \ll
                    (|C||D|)^{-1/3} (|AC| |BD|)^{2/3} + |D|^{-1} |BD| +|C|^{-1} |AC| \,.
$$

Similarly, in order to prove (\ref{f:2/3'}), we consider the equation $ab=\a$, $a\in A$, $b\in B$, and also
the equation
$
    (p-c)(p_*-d) = \a
$,
$p\in A+C$, $p_* \in B+D$, which correspond to the curves $l_{p,d} = \{ (x,y) ~:~ (p-x) (y-d) = \a \}$
and the points $\mathcal{P} = C\times (B+D)$.
It is easy to check that any two curves $l_{p,d}$, $l_{p',d'}$ have at  most two points in common.
After that one applies  the Szemer\'{e}di--Trotter Theorem one more time
and performs the calculations above.
This completes  the proof.
$\hfill\Box$
\end{proof}

\bigskip

Take arbitrary finite sets $A,B \subset \R$, $|B|>1$ and
consider the quantity
$$
    R[A,B] = \left\{ \frac{a-b}{b_1 - b} ~:~ a\in A,\, b, b_1 \in B,\, b_1 \neq b \right\}
    \,.
$$
Note  that for any $x$ one has $R[A+x,B+x] = R[A,B]$ and that for all nonzero $\lambda$ one has
$R[\lambda A, \lambda B] = R[A,B]$.

Now let us make a crucial observation about the set $R[A,B]$, which says that the set is somehow additively structured.
Because $\frac{a-b}{b_1 - b} - 1 = -\frac{a-b_1}{b-b_1}$, we have that
\begin{equation}\label{f:R_main}
     R[A,B] = 1-R[A,B] \,.
\end{equation}
Write $R[A]$ for $R[A,A]$
and note  that $R^{-1} [A] = R[A]$.
Geometrically,  the set
$$R[A] = \left\{ \frac{a_2-a}{a_1-a} ~:~ a,a_1,a_2 \in A,\, a_1 \neq a \right\}$$
is the set of all fractions of
(oriented)
lengths of segments $[a_1,a]$, $[a,a_2]$.
With this point of view the formulas $R^{-1} [A] = R[A]$ and $R[A] = 1-R[A]$ become
almost
trivial.
Also note that $0,1 \in R[A]$ and that putting $D=A-A$, we have
$R[A] \subseteq D/D \subseteq R[A] \cdot  R[A]$.
Thus, the products $(D/D)^n$, $n\in \N$ are controlled by $R^m [A]$, $m\in \N$ and vice versa.

\bigskip

{\bf Question.}
Let $A\subset \R$ be a finite set.
Is it true that $|R[A]| \gg |A-A|$?
Similarly, is it true that $|R[A]| \gg |A/A|$?

\bigskip

A simple consequence of Theorem \ref{t:R_Minkovski} is the following
lower bound for the size of the set $R[A]$, see \cite{J_PhD}, \cite{R_Minkovski}.

\begin{theorem}
    Let $A\subset \R$ be a finite set.
    Then
\begin{equation}\label{f:R_size}
    |R[A]| \gg \frac{|A|^2}{\log |A|} \,.
\end{equation}
\label{t:R_size}
\end{theorem}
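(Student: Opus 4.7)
The plan is to deduce Theorem~\ref{t:R_size} directly from the bound $\T(A) \ll |A|^4 \log |A|$ of Theorem~\ref{t:R_Minkovski} via a single Cauchy--Schwarz application, parametrizing $R[A]$ by ordered triples from $A$.

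Concretely, I would define, for each $x\in\R$, the representation function
\[
    r(x) = \l| \l\{ (a_0,a_1,a_2) \in A^3 ~:~ a_1 \neq a_0,\, x = \frac{a_2-a_0}{a_1-a_0} \r\} \r| \,.
\]
By construction $r$ is supported on $R[A]$. The first moment is trivial: summing over $x$ simply counts ordered triples with $a_1\neq a_0$, so
\[
    \sum_x r(x) = |A|^2(|A|-1) \gg |A|^3 \,.
\]

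The key step is to bound the second moment by the Jones/Roche--Newton quantity $\T(A)$. Unfolding the square, $\sum_x r(x)^2$ counts sextuples $(a_0,a_1,a_2,a_0',a_1',a_2')\in A^6$ (with $a_1\neq a_0$ and $a_1'\neq a_0'$) satisfying
\[
    (a_2 - a_0)(a_1' - a_0') = (a_1 - a_0)(a_2' - a_0') \,.
\]
Relabelling the six free variables by $a\leftarrow a_2$, $c\leftarrow a_0$, $b\leftarrow a_1'$, $d\leftarrow a_0'$, $a'\leftarrow a_1$, $b'\leftarrow a_2'$, this is exactly the equation $(a-c)(b-d) = (a'-c)(b'-d)$ defining $\T(A)$ in~(\ref{f:T_energy}). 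Hence
\[
    \sum_x r(x)^2 \le \T(A) \ll |A|^4 \log |A| \,.
\]

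Now a standard Cauchy--Schwarz inequality, using that $r$ is supported on $R[A]$, gives
\[
    |A|^6 \ll \l( \sum_x r(x) \r)^2 \le |R[A]| \cdot \sum_x r(x)^2 \ll |R[A]| \cdot |A|^4 \log |A| \,,
\]
which rearranges to the claimed bound $|R[A]| \gg |A|^2 / \log |A|$. No genuine obstacle arises: the only point to verify carefully is the identification of the unfolded second moment with (a subset of) the configurations counted by $\T(A)$, which is purely a relabelling of variables as above.
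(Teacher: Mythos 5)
Your proof is correct, and it is essentially the same argument the paper relies on: a first/second moment computation with the representation function of $R[A]$, the second moment identified with (a subset of) the collinear-triples count $\T(A)$ bounded by Theorem~\ref{t:R_Minkovski}, and a single Cauchy--Schwarz step. This is exactly the scheme the paper carries out (in the slightly more general form of Theorem~\ref{t:R_size_X}, via the sums $\sum_x |A\cap(A+x)\cap(A+\lambda x)|$ instead of triple counts), so no further comparison is needed.
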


Now we can prove the main result of this  section.
The author
thanks  Misha Rudnev,  who pointed out to the author how to improve Theorems \ref{t:DD}, \ref{t:DD_Fp} below.

\begin{theorem}
    Let $A\subset \R$ be a finite set and let $D=A-A$.
    Then
\begin{equation}\label{f:DD}
    |DD|,\, |D/D| \gg |D|^{\frac{5}{6}} |R [A]|^{\frac{1}{4}}
        \gg
            |D|^{\frac{5}{6}} |A|^{\frac{1}{2}} \log^{-\frac{1}{4}} |A| \,.
\end{equation}
    In particular,
\begin{equation}\label{f:DD'}
    |DD|,\, |D/D|
        \gg
            |D|^{1+\frac{1}{12}} \log^{-\frac{1}{4}} |D| \,.
\end{equation}
\label{t:DD}
\end{theorem}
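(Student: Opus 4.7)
The plan is to exploit the identity $R[A] = 1 - R[A]$ (formula (\ref{f:R_main})) and feed the resulting large additive intersection into the Szemer\'{e}di--Trotter based Lemma~\ref{l:2/3}, using the containment $R[A] \subseteq D/D$ together with Pl\"unnecke--Ruzsa to translate product bounds for $R[A]$ into the product bounds for $D$ claimed in (\ref{f:DD}).

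Write $R = R[A]$. The identity $R = 1 - R$ immediately yields $|R \cap (1 - R)| = |R|$. I would invoke Lemma~\ref{l:2/3} with the roles specialised as ``$A$''$=R$, ``$B$''$=-R$, $\alpha = 1$ (so that ``$B + \alpha$''$=1-R = R$), and ``$C$''$=$``$D$''$=D$ (the difference set $D = A-A$ is symmetric, so $-D = D$). This produces
\[
|R| \;\ll\; |D|^{-2/3}|R\cdot D|^{4/3} \;+\; |D|^{-1}|R\cdot D|.
\]
To bound $|R \cdot D|$, use $R \subseteq D/D$, which forces $R \cdot D \subseteq (D/D)\cdot D = DD/D$. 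Setting $M = |DD|/|D|$ (respectively $M = |D/D|/|D|$) and applying Lemma~\ref{l:Plunnecke} multiplicatively (with $B = D$ in the first case and $B = D^{-1}$ in the second), one obtains $|DD/D| \le M^{3}|D|$ under either hypothesis. Substituting gives $|R| \ll M^{4}|D|^{2/3} + M^{3}$; since $|R| \ge 1$ excludes the degenerate regime $M \le |D|^{-2/3}$, the first term must dominate, which rearranges to $M|D| \gg |R|^{1/4}|D|^{5/6}$. Hence both $|DD|$ and $|D/D|$ are $\gg |D|^{5/6}|R|^{1/4}$.

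For the remaining assertions, Theorem~\ref{t:R_size} supplies $|R| \gg |A|^{2}/\log|A|$, yielding $|D|^{5/6}|R|^{1/4} \gg |D|^{5/6}|A|^{1/2}/\log^{1/4}|A|$, which is the second inequality in (\ref{f:DD}). The final form (\ref{f:DD'}) then follows from the trivial estimates $|A|^{1/2} \ge |D|^{1/4}$ (since $|D| \le |A|^{2}$) and $\log|A| \le \log|D|$, which combine to give $|D|^{5/6}|A|^{1/2}/\log^{1/4}|A| \gg |D|^{1+1/12}/\log^{1/4}|D|$. The main technical balance in the argument is the Pl\"unnecke step delivering $|DD/D| \le M^{3}|D|$ with the same cubic exponent under either hypothesis on $|DD|$ or $|D/D|$; any weaker dependence there would immediately spoil the exponent $5/6$ in (\ref{f:DD}).
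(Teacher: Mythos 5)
Your proposal is correct and follows essentially the same route as the paper: the identity $R=1-R$, Lemma~\ref{l:2/3} with $C=D=A-A$, the containment $RD\subseteq DD/D$, the multiplicative Pl\"unnecke--Ruzsa bound $|DD/D|\le |D|^{-2}\min\{|DD|,|D/D|\}^3$, and then Theorem~\ref{t:R_size} plus $|A|\le |D|\le |A|^2$. The only (harmless) difference is that you keep the lower-order term from Lemma~\ref{l:2/3} and dismiss it explicitly, while the paper drops it tacitly.
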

\begin{proof}
Put  $R=R[A]$.
Using identity (\ref{f:R_main})
as well as
Lemma \ref{l:2/3} with $A=-R$, $B=R$, $C=D=A-A$ and $\a=-1$, we get
\begin{equation}\label{f:R_D}
    |R| = |-R \cap(R-1)| \ll |D|^{-2/3} |RD|^{4/3} \le |D|^{-2/3} |DD/D|^{4/3} \,.
\end{equation}
Applying Lemma \ref{l:Plunnecke} in its multiplicative form with $A=D$, $B=D$ or $B=D^{-1}$, we have
$$
    |DD/D| \le |D|^{-2} \min\{ |DD|, |D/D| \}^3 \,.
$$
Combining the last two bounds, we obtain the first inequality from (\ref{f:DD}).
The second inequality follows from Theorem \ref{t:R_size}.
The trivial estimates  $|A| \le |D| \le |A|^2$ imply formula (\ref{f:DD'}).
This completes  the proof.
$\hfill\Box$
\end{proof}

\bigskip

From identity (\ref{f:R_main}) and the results of section \ref{sec:SzT}, it follows that the set $R$ has some interesting properties, which we give in the next proposition, one consequence of which is that $R$ has a large product set $|RR| \gg |R|^{\frac{5}{4}}$.

\begin{proposition}
    Let $A\subset \R$ be a finite set, and $R=R[A]$.
    Then $D_\times (R) \ll |RR|^2 / |R|^2$.
    In particular, for any $B\subset \R$
    one has
\begin{equation}\label{f:exp_R}
    |RB| \gg \frac{|R|^2 |B|^{1/2}}{|RR|} \,.
\end{equation}
\label{p:exp_R}
\end{proposition}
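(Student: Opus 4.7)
To show $D_\times(R) \ll |RR|^2/|R|^2$, I would unpack Definition \ref{def:SzT-type'} and verify directly that, for any finite $B\subset\R$ and any $\tau\ge 1$,
\[
    |\mathcal{X}_\tau| := |\{x ~:~ |R\cap xB|\ge \tau\}| \ll \frac{|RR|^2\,|B|^2}{|R|\,\tau^3}\,.
\]
The key is to use the additive identity $R=1-R$ from (\ref{f:R_main}) to lift the naive incidence problem for $R$ to one whose ambient point set is the much richer $RR\times B$. Concretely, I would take $\mathcal{P}=RR\times B$ and, for each $(x,r^*)\in \mathcal{X}_\tau\times (R\setminus\{0\})$, the line
\[
    \ell_{x,r^*}\ :=\ \{(u,v)\in \R^2 \,:\, u + r^* x v = r^*\}\,.
\]

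For each $v\in B$ with $xv\in R$, the identity $R=1-R$ forces $1-xv\in R$, and therefore $u:=r^*(1-xv)\in R\cdot R=RR$, so $(u,v)\in\mathcal{P}$. Since $x\in \mathcal{X}_\tau$ guarantees at least $\tau$ such $v$, each line $\ell_{x,r^*}$ carries at least $\tau$ incidences with $\mathcal{P}$. On the other hand, $\ell_{x,r^*}$ has slope $-r^*x$ and $u$-intercept $r^*$; since $r^*\neq 0$, distinct parameter pairs $(x,r^*)$ produce distinct lines. Thus $|\mathcal{L}|\asymp |\mathcal{X}_\tau||R|$, and Theorem \ref{t:SzT} yields
\[
    \tau |\mathcal{X}_\tau||R| \ll (|RR|\,|B|)^{2/3}(|\mathcal{X}_\tau||R|)^{2/3} + |RR|\,|B| + |\mathcal{X}_\tau||R|\,.
\]
The main term rearranges to the target bound $|\mathcal{X}_\tau| \ll |RR|^2|B|^2/(|R|\tau^3)$; the term $|\mathcal{X}_\tau||R|$ merely gives $\tau=O(1)$ (a trivial regime absorbed by enlarging constants), and the term $|RR|\,|B|$ gives a bound that is stronger than the target once $\tau^2\le |RR||B|$, which is precisely the non-trivial range. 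This yields $D_\times(R)\ll |RR|^2/|R|^2$.

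For (\ref{f:exp_R}), I would apply Corollary \ref{c:Phi_cor} with $\Phi(b,x)=bx$, $A=R$: since $\{x:\alpha^\Phi_{R,B}(x)>0\}=\{x:R\cap xB\neq\emptyset\}=R/B$, formula (\ref{f:Phi_cor}) gives
\[
    |R/B| \gg |R|\,|B|^{1/2} D_\times(R)^{-1/2} \gg \frac{|R|^2\,|B|^{1/2}}{|RR|}\,.
\]
The identity $R^{-1}=R$ recorded just after the definition of $R[A]$ makes inversion a bijection between $RB$ and $R/B=RB^{-1}$ (up to the single element $0$), giving $|RB|=|R/B|$ and thus (\ref{f:exp_R}). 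The only delicate point I anticipate is the bookkeeping that makes the line family a genuine pseudo-line system of size $\asymp |\mathcal{X}_\tau||R|$ — in particular excising the exceptional element $r^*=0\in R$ (and the at most one $x=0$ case when $\tau=1$) to guarantee distinct slopes and intercepts — but this is routine.
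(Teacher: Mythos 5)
Your argument is correct, but it takes a genuinely different route from the paper. The paper does not verify Definition \ref{def:SzT-type'} directly: it invokes the general machinery of the appendix (Definition \ref{def:d_Phi}, Proposition \ref{p:F_Phi}, formula (\ref{f:S_t})) with the logarithmic change of variables $F(x,y)=f(x)+y$, $f(x)=\ln(x-1)$, and the test set $C=\ln R$, so that the identity $R=1-R$ gives $|f(R)+C|\le |(R-1)\cdot R|=|RR|$ and hence $D_\times(R)\ll |RR|^2/|R|^2$; the passage to (\ref{f:exp_R}) is then the same application of Corollary \ref{c:Phi_cor}. You instead prove the defining inequality for $\Phi(x,y)=xy$ by a bare-hands incidence count: genuine lines $u+r^*xv=r^*$ indexed by $\mathcal{X}_\tau\times(R\setminus\{0\})$, the point set $RR\times B$, and the identity (\ref{f:R_main}) used to place the points $(r^*(1-xb),b)$ in $RR\times B$. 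This is more self-contained (no pseudo-line verification, no appendix, no logarithm and its attendant sign/domain issues), at the price of being tailored to this particular $\Phi$, whereas the paper's route buys the reusable bound (\ref{f:S_t}) for general $f$. Your handling of the error terms is essentially the paper's own trick from Proposition \ref{p:F_Phi}: the term $|RR||B|$ is dominated because $\tau\le\min\{|R|,|B|\}\le(|RR||B|)^{1/2}$ for any nonzero $x\in\mathcal{X}_\tau$, and the regime $\tau=O(1)$ is closed by the trivial bound $|\mathcal{X}_\tau|\le|R/B|+1\le|R||B|+1\ll |RR|^2|B|^2/|R|$ — you should say these two lines explicitly. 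Two small corrections to your bookkeeping: the element $x=0$ can lie in $\mathcal{X}_\tau$ for every $\tau\le|B|$ (not only $\tau=1$) once $0\in R$, so the clean fix is to discard $0$ from $R$ and $B$ at the outset — exactly the convention the paper itself adopts in the appendix when deriving (\ref{f:S_t}) — which costs only $O(1)$ and does not affect (\ref{f:exp_R}); and your final step $|RB|=|R/B|$ via $R^{-1}=R$ (inversion being a bijection off $0$) is needed equally in the paper's version, since Corollary \ref{c:Phi_cor} with $\Phi(x,y)=xy$ literally bounds $|R/B|$, so you are right to make it explicit.
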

\begin{proof}
    Put $f(x) = \ln (x-1)$.
    Then one can check that for any $b\neq b'$ the function $f(xb) - f(xb')$ is monotone.
    Formula (\ref{f:S_t}) gives us that
$$
    D_\times (R) \le \min_{C} \frac{|f(R)+C|^2}{|R||C|} \,.
$$
    Now putting $C=\ln R$ and using identity (\ref{f:R_main}), we deduce that $D_\times (R) \le |RR|^2 / |R|^2$.
    The last bound combined with formula (\ref{f:Phi_cor}) of Corollary \ref{c:Phi_cor} implies (\ref{f:exp_R}).
    This completes the proof.
$\hfill\Box$
\end{proof}

\bigskip

One can obtain the inequality $|RR| \gg |R|^{\frac{5}{4}}$ by taking $B=R$ in the general formula (\ref{f:exp_R}).
Actually, since Proposition \ref{p:exp_R} gives the stronger result $D_\times (R) \ll |RR|^2 / |R|^2$, one can apply methods of \cite{s_sumsets} to derive the inequality $|RR| \gtrsim |R|^{14/9} \cdot D_\times (R)^{-5/9}$ and hence  the inequality $|RR| \gtrsim |R|^{24/19}$.

\bigskip

We finish this section with an example of a set $A$ with a difference set that has small product set and small quotient set.

\begin{proposition}
    For any integer $n\ge 1$ there is a set $A\subset \R$, $|A| = n$ such that  $|DD|, |D/D| \le 25 |D|^{3/2}$,
    where $D=A-A$.
\label{p:lower_D}
\end{proposition}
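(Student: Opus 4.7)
The plan is to take $A$ to be a geometric progression with transcendental common ratio. Specifically, set $A = \{1, q, q^2, \ldots, q^{n-1}\}$ with $q \in \R$ transcendental over $\mathbb{Q}$. Every element of $D = A-A$, $DD$, and $D/D$ is then the value at $q$ of some Laurent polynomial in one variable with integer coefficients, so transcendence lets me replace enumeration of distinct real values by enumeration of possible parameter tuples, reducing the problem to pure combinatorics.

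For $D$ itself, the nonzero elements are $\pm(q^j - q^i) = \pm q^i(q^{j-i}-1)$ with $0 \le i < j \le n-1$, and these are pairwise distinct, giving $|D| = 1 + 2\binom{n}{2} = n^2 - n + 1$. Writing each nonzero element of $D$ as $\pm q^{a}(q^{b}-1)$ with $a \in \{0, \ldots, n-2\}$ and $b \in \{1, \ldots, n-1\}$, each nonzero element of $DD$ has the form
\[
\pm\, q^{a_1+a_2}(q^{b_1}-1)(q^{b_2}-1).
\]
Since the second factor is symmetric in $b_1, b_2$, the number of possible values is at most $2(2n-3)\binom{n}{2}$, so $|DD| \le 1 + (2n-3)n(n-1)$. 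An analogous parametrization of $D/D$ by $\pm\, q^{a_1-a_2}(q^{b_1}-1)/(q^{b_2}-1)$ gives the crude bound $|D/D| \le 1 + 2(2n-3)(n-1)^2$, using ordered pairs $(b_1,b_2)$ since the quotient is not symmetric. No injectivity claim is needed; these are simply surjective parametrizations.

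To conclude, for $n \ge 2$ the elementary inequality $n^2 - n + 1 \ge n^2/2$ gives $|D|^{3/2} \ge n^3/(2\sqrt{2})$, and a direct arithmetic check then shows both $|DD|/|D|^{3/2} \le 4\sqrt{2}$ and $|D/D|/|D|^{3/2} \le 8\sqrt{2}$, each comfortably below $25$; the case $n = 1$ is trivial since $D = \{0\}$ forces $|DD| = |D/D| = 1$. The one real ``obstacle'' is the uniform verification of the constant $25$, in particular for small $n$ and for $|D/D|$ (whose bound is a factor of roughly $2$ weaker than the bound for $|DD|$); because $n^2-n+1 \ge n^2/2$ holds throughout, this is just bookkeeping, and a sharper analysis could replace $25$ with a smaller universal constant if desired.
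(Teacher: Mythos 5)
Your proof is correct and follows essentially the same route as the paper, which takes $A=\{2,4,\dots,2^n\}$ and bounds $|DD|,|D/D|$ by factoring each element as a power of the ratio times a product/quotient of terms $2^b-1$, exactly your parametrization (the transcendental ratio only serves to make the distinctness of the elements of $D$ automatic, which for ratio $2$ is an easy direct check). Your bookkeeping with signs and symmetry even gives slightly sharper constants than the paper's crude $(2n)^3$ bound.
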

\begin{proof}
    Let $A = \{ 2,4,\dots,2^{n} \}$.
    Then it is easy to see that $n^2 /2 \le |D| = n^2 -n+1 \le n^2$.
    Furthermore,
$$
    DD = \{ (2^i - 2^j) (2^k - 2^l) ~:~ i,j,k,l\in [n] \} = \{ 2^{j+l} (2^{i-j} -1) (2^{k-l} -1) ~:~ i,j,k,l\in [n] \}
$$
    and hence $|DD| \le (2n)^3 \le (2 \sqrt{2 |D|})^3 \le 25 |D|^{3/2}$.
    Similarly,
$$
    D/D = \{ (2^i - 2^j)/(2^k - 2^l) ~:~ i,j,k,l\in [n] \} = \{ 2^{j-l} (2^{i-j} -1)/(2^{k-l} -1) ~:~ i,j,k,l\in [n] \}
$$
    and again $|D/D| \le 25 |D|^{3/2}$.
    This completes the proof.
$\hfill\Box$
\end{proof}

\section{The finite fields case}
\label{sec:FF}

In this  section $p$ is a prime number.
An analog of Lemma \ref{l:2/3}  in the prime fields setting is the following, see \cite{AMRS}.

\begin{lemma}
    Let $A, B \subset \F_p$ be two subsets with $|A|=|B| < p^{2/3}$.
    Then for any nonzero $\alpha \in \F_p$ one has
\begin{equation}\label{f:2/3_Fp}
    |A \cap (B+\a)| \ll |A|^{-1/2}
        |AB|^{4/3} \,.
\end{equation}
    Moreover, for any $C,D \subset \F_p$ with $|C|,|D| < p^{2/3}$
    we have the inequality
\begin{equation}\label{f:2/3_Fp'}
    |A \cap (B+\a)| \ll (|C||D|)^{-1/4}
        (|AC| |BD|)^{2/3} \,.
\end{equation}
\label{l:2/3_Fp}
\end{lemma}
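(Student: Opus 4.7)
The plan is to mimic the proof of Lemma \ref{l:2/3}, but with the Szemerédi--Trotter Theorem \ref{t:SzT} replaced by Rudnev's point-plane incidence theorem in $\F_p^3$, applied to a configuration supported on the quadric $z=xy$; this is exactly the mechanism systematized in \cite{AMRS}.

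First, I would parametrize as in Lemma \ref{l:2/3}. For each pair $(a,b)\in A\times B$ with $a-b=\alpha$ and each $(c,d)\in C\times D$, set $p:=ac\in AC$ and $q:=bd\in BD$; then $pd-qc=\alpha cd$, and each $(a,b)$ contributes exactly $|C||D|$ quadruples to the count on the right, so
\[
|A\cap(B+\alpha)|\cdot|C||D| \;\le\; N,\qquad N:=\#\{(p,q,c,d)\in AC\times BD\times C\times D\,:\,pd-qc=\alpha cd\}.
\]

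Next, I would recast $N$ as a point--plane incidence count in $\F_p^3$: take the point set
$\mathcal{P}=\{(c,d,cd)\,:\,c\in C,\,d\in D\}$ of size $|C||D|$, lying on the quadric $z=xy$, and the plane family
$\Pi=\{\pi_{p,q}:px_2-qx_1-\alpha x_3=0\mid (p,q)\in AC\times BD\}$ of size at most $|AC||BD|$. Since $(c,d,cd)\in\pi_{p,q}$ if and only if $pd-qc=\alpha cd$, we have $N=\mathcal{I}(\mathcal{P},\Pi)$. The hypotheses $|A|,|B|,|C|,|D|<p^{2/3}$ keep $|\mathcal{P}|\ll p^2$ so that Rudnev's theorem is nontrivial, and the quadric structure of $\mathcal{P}$ controls the collinearity parameter (any line on $z=xy$ meets $\mathcal{P}$ in at most $\max(|C|,|D|)$ points).

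The third step is to run the standard dyadic-multiplicity analysis of Rudnev's theorem on this configuration (following \cite{AMRS}) to obtain the asymmetric bound
\[
N \;\ll\; (|C||D|)^{3/4}\,(|AC||BD|)^{2/3}.
\]
Dividing by $|C||D|$ yields the second inequality
$|A\cap(B+\alpha)|\ll (|C||D|)^{-1/4}(|AC||BD|)^{2/3}$,
and the first inequality follows by specializing $C=B$, $D=A$ and using $|A|=|B|$ together with $|AB|=|BA|$:
\[
|A\cap(B+\alpha)|\ll (|B||A|)^{-1/4}(|AB||BA|)^{2/3}=|A|^{-1/2}|AB|^{4/3}.
\]

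The main obstacle is the asymmetric incidence estimate in Step 3. A naive invocation of Rudnev only gives $N\ll(|C||D|)^{1/2}|AC||BD|$, which is weaker than what is claimed; upgrading to the exponents $(3/4,\,2/3)$ requires a dyadic decomposition of $\Pi$ by the number of points of $\mathcal{P}$ incident to each plane, with the quadric structure of $\mathcal{P}$ and Plünnecke-type control on $|AC|$, $|BD|$ used to keep the collinearity/pencil error subleading across every scale. Verifying this at the level of the hypothesis $|A|,|B|<p^{2/3}$ (and its analog for $C,D$) is exactly the technical heart of the argument and the reason the proof is deferred to \cite{AMRS}.
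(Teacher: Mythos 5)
Your overall strategy matches the paper's: the same multiplicative parametrization of $a-b=\alpha$ into an incidence problem, the same target bound $|C||D|\,|A\cap(B+\alpha)|\ll (|C||D|)^{3/4}(|AC||BD|)^{2/3}$, and your observation that (\ref{f:2/3_Fp}) follows from (\ref{f:2/3_Fp'}) by taking $C=B$, $D=A$ is a legitimate (and slightly cleaner) reduction than the paper's remark that the $A=B$ case is in \cite{AMRS}. The cosmetic difference is that you pass to point--plane incidences in $\F_p^3$ on the quadric $z=xy$, while the paper stays in two dimensions, taking points $C^{-1}\times D^{-1}$ and lines $sx-ty=\alpha$ with $s\in AC$, $t\in BD$, and then quotes from \cite{AMRS} the Cartesian-product incidence bound (\ref{f:Sz_T_F_p_f}), $\mathcal{I}(\mathcal{P},\mathcal{L})\ll |\mathcal{P}|^{3/4}|\mathcal{L}|^{2/3}+|\mathcal{L}|+|\mathcal{P}|$, valid for $\mathcal{P}=\mathcal{A}\times\mathcal{B}$ with $|\mathcal{B}|\le|\mathcal{A}|<p^{2/3}$.

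The genuine gap is your Step 3. The clean asymmetric estimate $N\ll(|C||D|)^{3/4}(|AC||BD|)^{2/3}$, with no lower-order terms, is not a quotable statement of Rudnev's theorem nor of \cite{AMRS}; what is available there (and what the paper uses) carries the extra terms $|\mathcal{L}|+|\mathcal{P}|$, and eliminating them is not automatic. In the paper this is precisely where the remaining work lies: one assumes, for contradiction, that the claimed bound (\ref{f:fixed1}) fails, deduces $(|AC||BD|)^8\ll m^{12}(|C||D|)^3$ with $m=\min\{|A|,|B|\}$, and then rules out the regimes where the terms $|\mathcal{L}|$ or $|\mathcal{P}|$ dominate the incidence count, using the hypotheses $|A|=|B|<p^{2/3}$ and $|C|,|D|<p^{2/3}$ together with trivial bounds such as $|AC||BD|\ge|A||B|$ and $|AC||BD|\ge|C||D|$. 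Your proposal replaces this case analysis by an appeal to a ``standard dyadic-multiplicity analysis following \cite{AMRS}'' and then concedes in the final paragraph that this is the technical heart and is deferred. As written, therefore, the decisive inequality is asserted rather than proved, and the hypotheses of the lemma (the $p^{2/3}$ thresholds), which are exactly what make the error terms disposable, are never actually used. To complete your route you would either have to carry out the dyadic/rich-plane argument on the quadric configuration in full (in effect re-proving the two-dimensional theorem of \cite{AMRS}), or, more economically, cite (\ref{f:Sz_T_F_p_f}) as the paper does and supply the contradiction argument handling the terms $|\mathcal{L}|+|\mathcal{P}|$.
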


\noindent {\it Sketch of the proof.}
Actually, bound (\ref{f:2/3_Fp}) was proved  in \cite{AMRS}  in a particular case $A=B$ but since  the proof uses a variant of the Szemer\'{e}di--Trotter Theorem in $\F_p$, namely,
\begin{equation}\label{f:Sz_T_F_p_f}
    \mathcal{I}(\mathcal{P},\mathcal{L}) \ll |\mathcal{P}|^{3/4}|\mathcal{L}|^{2/3}+|\mathcal{L}| + |\mathcal{P}|
\end{equation}
for any set $\mathcal{P}$ of the form $\mathcal{P} = \mathcal{A}\times \mathcal{B}$ with $|\mathcal{B}| \le |\mathcal{A}|<p^{2/3}$
(see \cite{AMRS}),
the arguments of the proof of Lemma \ref{l:2/3} are preserved.
In order to obtain (\ref{f:2/3_Fp'}) we use the same method with $\mathcal{P} = C^{-1} \times D^{-1}$ and
$\mathcal{L} = \{ l_{s,t} \}$, where $s\in AC, t\in BD$ and $l_{s,t}:= \{ (x,y) ~:~ sx-ty=\alpha \}$, so
$|\mathcal{L}| = |AC| |BD|$.
Then one has  $\mathcal{I} (\mathcal{P},\mathcal{L}) \ge |C||D| |A\cap (B+\alpha)|$ -- see the proof of Lemma \ref{l:2/3}.
Let $m = \min \{ |A|, |B|\}$.
Suppose that
\begin{equation}\label{f:fixed1}
        |A \cap (B+\a)| \gg (|C||D|)^{-1/4} (|AC| |BD|)^{2/3}
\end{equation}
because otherwise there is nothing to prove.
It follows that
$$
    (|AC| |BD|)^8 \ll m^{12} (|C||D|)^3 \,.
$$
If the second term in (\ref{f:Sz_T_F_p_f}) dominates, then in view of the last inequality, we obtain
$$
    m^{6}(|C||D|)^{3/2}  \gg (|AC||BD|)^4 \gg (|C||D|)^9 \,.
$$
Thus, $|C| |D| \ll m^{4/5}$, and  from (\ref{f:fixed1}) we have
$$
    m^{6/5} \gg |A \cap (B+\a)| (|C||D|)^{1/4} \gg (|AC| |BD|)^{2/3} \ge (|A||B|)^{2/3} \ge m^{4/3},
$$
which is  a contradiction for large $m$.
Similarly, if the third term in (\ref{f:Sz_T_F_p_f}) is the largest one, then
$$
    (|C||D|)^3 \gg (|AC||BD|)^8 \ge (|C||D|)^8,
$$
which is a contradiction for large $C$, $D$.
In the remaining case, we obtain
$$
    |C||D| |A\cap (B+\alpha)| \ll \mathcal{I} (\mathcal{P},\mathcal{L}) \ll |\mathcal{P}|^{3/4}|\mathcal{L}|^{2/3}
        \le
            (|C| |D|)^{3/4} (|AC||BD|)^{2/3}
$$
as required.

\bigskip

Now let us prove an analog of Theorem \ref{t:DD}.

\begin{theorem}
    Let $A\subset \F_p$ be a finite set and let $D=A-A$.
    Suppose that $|R[A]| \le  cp^{5/9}$, where $c>0$ is an absolute constant.
    Then
\begin{equation}\label{f:DD_Fp}
    |DD|,\, |D/D| \gg |D|^{\frac{19}{24}} |R[A]|^{\frac{1}{4}}
        \gg
            |D|^{\frac{19}{24}} |A|^{\frac{3}{8}} \,.
\end{equation}
\label{t:DD_Fp}
\end{theorem}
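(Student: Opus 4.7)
The plan is to mimic the proof of Theorem \ref{t:DD} in the finite-field setting, using Lemma \ref{l:2/3_Fp} (the $\F_p$ analog of the Szemer\'edi--Trotter-type bound in Lemma \ref{l:2/3}) in place of its real counterpart, and invoking Theorem \ref{t:T(A)_Fp} in place of Theorem \ref{t:R_Minkovski} to convert an estimate involving $|R[A]|$ into one involving $|A|$.

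Set $R := R[A]$. Using identity (\ref{f:R_main}), which reads $R = 1-R$, I would apply inequality (\ref{f:2/3_Fp'}) of Lemma \ref{l:2/3_Fp} with the sets $-R, R, D, D$ playing the roles of $A, B, C, D$ in that lemma, and with $\alpha = -1$. The required size constraint $|R| < p^{2/3}$ follows from the hypothesis $|R[A]| \le cp^{5/9}$; the constraint $|D| < p^{2/3}$ may be assumed without loss of generality, since if $|D| \ge p^{2/3}$ then $|R[A]| \le cp^{5/9} \le c|D|^{5/6}$, whence the trivial inequality $\min\{|DD|, |D/D|\} \ge |D|$ already dominates $|D|^{19/24}|R[A]|^{1/4}$ for a small enough absolute constant $c$. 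The lemma then yields $|R| \ll |D|^{-1/2} |R\cdot D|^{4/3}$.

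Next, exactly as in the real case, I would combine the inclusion $R \subseteq D/D$ with the multiplicative Pl\"unnecke--Ruzsa inequality (Lemma \ref{l:Plunnecke}) to bound $|R \cdot D| \le |D\cdot D/D| \le |D|^{-2} \min\{|DD|, |D/D|\}^3$. Substituting into the previous inequality gives $|R| \ll |D|^{-19/6} \min\{|DD|, |D/D|\}^4$, which is the first inequality of (\ref{f:DD_Fp}) after taking fourth roots. For the second inequality, I would derive the auxiliary bound $|R[A]| \gg |A|^{3/2}$ as follows: for each $r \in R[A]$ let $m(r)$ count triples $(a, a_1, a_2) \in A^3$ with $a_1 \ne a$ and $(a_2-a) = r(a_1-a)$. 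Then $\sum_r m(r) \asymp |A|^3$, and an elementary relabeling of variables identifies $\sum_r m(r)^2$ with a subcount of $\T(A)$ as defined in (\ref{f:T_energy}). Cauchy--Schwarz combined with Theorem \ref{t:T(A)_Fp}, which applies since $|A| \le |D| < p^{2/3}$ by the previous step, yields $|R[A]| \gg |A|^6/\T(A) \gg |A|^{3/2}$, and taking fourth roots gives the second inequality of (\ref{f:DD_Fp}).

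The main difficulty is not conceptual but the careful bookkeeping of size constraints: both Lemma \ref{l:2/3_Fp} and Theorem \ref{t:T(A)_Fp} require inputs of size below $p^{2/3}$, and the hypothesis $|R[A]| \le cp^{5/9}$ is calibrated precisely so that either these constraints hold directly or the conclusion becomes trivial via the reduction to $|D| < p^{2/3}$. Beyond these verifications, the argument is a direct $\F_p$ transcription of the computation used for Theorem \ref{t:DD}, with the exponent $-1/2$ in (\ref{f:2/3_Fp'}) replacing the exponent $-2/3$ in (\ref{f:2/3}) and producing the slightly weaker exponent $19/24$ in the final conclusion in place of $5/6$.
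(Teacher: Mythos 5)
Your proposal follows essentially the same route as the paper's own proof: the same application of inequality (\ref{f:2/3_Fp'}) to $-R[A]$ and $R[A]$ with $C=D=A-A$ and $\alpha=-1$, the same multiplicative Pl\"unnecke--Ruzsa step giving $|DD/D|\le |D|^{-2}\min\{|DD|,|D/D|\}^{3}$, the same Cauchy--Schwarz deduction of $|R[A]|\gg |A|^{3/2}$ from Theorem \ref{t:T(A)_Fp}, and the same trivial disposal of the case $|D|\ge p^{2/3}$ via the hypothesis $|R[A]|\le cp^{5/9}$. One tiny repair: in that excluded case the second inequality of (\ref{f:DD_Fp}) still requires $|R[A]|\gg |A|^{3/2}$, and there the hypothesis of Theorem \ref{t:T(A)_Fp} should be justified by $|A|\le |R[A]|\le cp^{5/9}<p^{2/3}$ rather than by $|A|\le |D|<p^{2/3}$, which is exactly how the paper handles it.
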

\begin{proof}
The arguments are the same as in the proof of Theorem \ref{t:DD}.
Put $R=R[A]$.
Suppose that $|D| < p^{2/3}$.
Then, using identity (\ref{f:R_main})
and Lemma \ref{l:2/3_Fp} (with $A=-R$, $B=R$, $\a = -1$, $C=D=A-A$) instead of Lemma \ref{l:2/3}, we get
\begin{equation}\label{f:R_D_FF}
    |R| = |-R \cap(R-1)| \ll |D|^{-1/2} |RD|^{4/3} \le |D|^{-1/2} |DD/D|^{4/3} \,.
\end{equation}
By Lemma \ref{l:Plunnecke}, we have
$$
    |DD/D| \le |D|^{-2} \min\{ |DD|, |D/D| \}^3 \,.
$$
Combining the last two bounds, we obtain the first inequality from (\ref{f:DD_Fp}).
Because $|A| \le |D| < p^{2/3}$, we see that Theorem \ref{t:T(A)_Fp} implies that $|R[A]| \gg |A|^{3/2}$
and this gives us the second inequality.

Now assume that $|D| \ge p^{2/3}$. We shall show that
\begin{equation}\label{tmp:18.01.2016_1}
    |DD|,\, |D/D| \gg |D|^{\frac{19}{24}} |R|^{\frac{1}{4}} \,.
\end{equation}
If inequality (\ref{tmp:18.01.2016_1}) does not hold for $|DD|$, then
$$
    |D| \le |DD|, |D/D| \ll |D|^{\frac{19}{24}} |R|^{\frac{1}{4}} \,.
$$
Hence,
$$
    |R| \gg |D|^{5/6} \ge p^{5/9},
$$
which is a contradiction.
Again, the second inequality in (\ref{f:DD_Fp}) follows from Theorem \ref{t:T(A)_Fp} and the fact that $|A| \le |R| \ll p^{5/9} < p^{2/3}$.
This completes  the proof.
$\hfill\Box$
\end{proof}

\bigskip

Note that an analog of Proposition \ref{p:exp_R} also holds in $\F_p$ because of an appropriate version of Theorem \ref{t:SzT} (see \cite{AMRS}).
Also note that one can relax the condition $|R[A]| \le  cp^{5/9}$ at the cost of a weaker bound (\ref{f:DD_Fp}).

\bigskip

Now we can obtain Theorem \ref{t:A-A_new_intr}
from the introduction.

\begin{theorem}
Let $\G \subset \F_p$ be a multiplicative subgroup with $|\G| < p^{3/4}$ and let $\xi \neq 0$ be an arbitrary residue.
Suppose that for some $A\subset \F_p$
one has
\begin{equation}\label{cond:A-A_new}
    A-A \subseteq \xi \G \bigsqcup \{ 0 \} \,.
\end{equation}
    Then $|A| \ll |\G|^{4/9}$.
    If $|\G| \ge p^{3/4}$, then $|A| \ll |\G|^{4/3} p^{-2/3}$.
    In particular, for any $\eps>0$ and sufficiently large $\G$, $|\G| \le p^{4/5-\eps}$ we have that
    $$
        A-A \neq \xi \G \bigsqcup \{ 0 \} \,.
    $$
\label{t:A-A_new}
\end{theorem}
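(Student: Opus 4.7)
The plan is to exploit the cross-ratio set $R[A]$ from Section \ref{sec:proof}, its self-complementarity identity (\ref{f:R_main}) $R[A] = 1-R[A]$, and Heath-Brown--Konyagin style bounds on intersections of shifted subgroups. After the harmless normalization of replacing $A$ by $(A-a_0)/\xi$ for some $a_0 \in A$, we may assume $0\in A$, $A\subseteq \G \cup \{0\}$, and $A-A\subseteq \G \cup \{0\}$. Since in the definition of $R[A]$ the numerator lies in $\G\cup\{0\}$ and the denominator in $\G$, we have $R[A]\subseteq \G\cup\{0\}$; applying (\ref{f:R_main}) gives $R[A]\subseteq (1-\G)\cup\{1\}$, whence
\[
R[A]\setminus\{0,1\}\subseteq \G\cap(1-\G).
\]

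To bound $|\G\cap(1-\G)|$, I would pass to $\tilde\G$, the subgroup generated by $\G$ and $-1$ (of order at most $2|\G|$, equal to $\G$ when $-1\in\G$). Since $-\tilde\G = \tilde\G$ one has $\G\cap(1-\G)\subseteq \tilde\G\cap(\tilde\G+1)$. When $|\G|<p^{3/4}$ the hypotheses of Theorem \ref{t:main_many_shifts} with $k=1$ hold (up to absolute constants) and yield $|\G\cap(1-\G)|\ll |\G|^{2/3}$. When $|\G|\ge p^{3/4}$, Lemma \ref{l:C_for_subgroups} with $k=1$ applies to $\tilde\G$ and gives $|\G\cap(1-\G)|\le |\tilde\G|^2/(p-1)+O(\sqrt p)\ll |\G|^2/p$.

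On the other side, $|R[A]|$ can be bounded from below via the energy $\T(A)$. Let $n_x$ count triples $(a,a_1,a_2)\in A^3$ with $a_1\ne a$ and $(a_2-a)/(a_1-a)=x$. Then $\sum_x n_x = |A|^2(|A|-1)\gg |A|^3$, while $\sum_x n_x^2$ counts sextuples with $(a_2-a)(b_1-b)=(a_1-a)(b_2-b)$, which after obvious relabeling is exactly $\T(A)$. Since $|A|<p^{2/3}$ is easily verified a posteriori from the bounds to come, Theorem \ref{t:T(A)_Fp} gives $\T(A)\ll |A|^{9/2}$, and Cauchy--Schwarz yields $|R[A]|\gg |A|^6/\T(A)\gg |A|^{3/2}$.

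Combining the two sides, $|A|^{3/2}\ll |R[A]|\ll |\G\cap(1-\G)|$ produces $|A|\ll |\G|^{4/9}$ when $|\G|<p^{3/4}$ and $|A|\ll |\G|^{4/3}p^{-2/3}$ when $|\G|\ge p^{3/4}$. For the non-representability assertion, $A-A=\xi\G\sqcup\{0\}$ would force $|A|^2\ge |A-A|=|\G|+1$, hence $|A|\gg |\G|^{1/2}$; this contradicts $|A|\ll |\G|^{4/9}$ for every sufficiently large $|\G|$ in the first regime, and in the second regime $|A|\ll |\G|^{4/3}p^{-2/3}$ combined with $|A|\gg |\G|^{1/2}$ forces $|\G|^{5/6}\gg p^{2/3}$, i.e.\ $|\G|\gg p^{4/5}$, excluded by $|\G|\le p^{4/5-\eps}$. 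The main technical subtlety is the passage from $1-\G$ to a genuine shift $\tilde\G+1$ so that Theorem \ref{t:main_many_shifts} (or Lemma \ref{l:C_for_subgroups}) at $k=1$ applies cleanly and supplies the crucial $|\G|^{2/3}$ (resp.\ $|\G|^2/p$) bound; everything else is a direct combination of (\ref{f:R_main}), the intersection bound, and $\T(A)\ll |A|^{9/2}$.
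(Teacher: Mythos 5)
Your argument follows essentially the same route as the paper's own proof: the containment $R[A]\subseteq \G\cup\{0\}$ coming from (\ref{cond:A-A_new}), the identity $R[A]=1-R[A]$ forcing $R[A]\setminus\{0,1\}\subseteq \G\cap(1-\G)$, the Stepanov/Konyagin-type bounds on that intersection in the two regimes (Theorem \ref{t:main_many_shifts} with $k=1$, resp.\ Lemma \ref{l:C_for_subgroups}), and the lower bound $|R[A]|\gg |A|^6/\T(A)\gg |A|^{3/2}$ via Theorem \ref{t:T(A)_Fp}. Your endgame via the trivial $|A|\ge |A-A|^{1/2}>|\G|^{1/2}$ is even a little more elementary than the paper's citation of known results, and your passage to $\tilde\G=\langle\G,-1\rangle$ is a legitimate (slightly more careful) way of treating $1-\G$ as an additive shift of a subgroup; also, your $\sum_x n_x^2$ is at most $\T(A)$ rather than exactly $\T(A)$, but the inequality goes the right way.

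The one genuine gap is your claim that the hypothesis $|A|<p^{2/3}$ of Theorem \ref{t:T(A)_Fp} is ``easily verified a posteriori from the bounds to come''. As written this is circular: those bounds are derived using $\T(A)\ll |A|^{9/2}$, which requires $|A|<p^{2/3}$ in the first place. In the regime $|\G|<p^{3/4}$ the circularity can be repaired, but you must say how: either run the argument on a subset $A'\subseteq A$ of size just below $p^{2/3}$ and derive a contradiction with $|A'|\ll|\G|^{4/9}\le p^{1/3}$, or note directly that fixing $a_0\neq a_1\in A$ embeds $A$ (up to two elements) into $\xi\G\cap(\xi\G+c)$ for a fixed $c\neq0$, of size $\ll|\G|^{2/3}<p^{2/3}$ by the $k=1$ bound. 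In the regime $|\G|\ge p^{3/4}$ neither repair works: the target bound $|A|\ll|\G|^{4/3}p^{-2/3}$ can be as large as roughly $p^{2/3}$ (so it does not re-establish the needed hypothesis), and the single-shift estimate of Lemma \ref{l:C_for_subgroups} only yields $|A|\ll|\G|^{2}/p+\sqrt p$, which may exceed $p^{2/3}$. The paper closes exactly this hole by importing an a priori estimate $|A|\le|\G|^{1/2+o(1)}$ (Shparlinski/Shkredov--Vyugin, i.e.\ Theorem \ref{t:main_many_shifts} with larger $k$, or Lemma \ref{l:C_for_subgroups} with $k\ge2$ in the range $|\G|\le p^{4/5-\eps}$) before invoking Theorem \ref{t:T(A)_Fp}; you need such an a priori input as well.
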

\begin{proof}
We can assume that $|A|>1$.
Let $\G_* = \G \sqcup \{ 0 \}$ and let $R=R[A]$.
Then in the light of our condition (\ref{cond:A-A_new}), we have
$$
    R\subseteq (\xi \G \sqcup \{ 0 \} ) / \xi \G = \G_* \,.
$$
In particular, $|R| \le |\G| +1 < p^{3/4}+1$.
We know by (\ref{cond:A-A_new}) that $|A| \le |\G|^{1/2+o(1)} < p^{2/3}$ (see \cite{Shparlinski_AD}, \cite{sv}, \cite{Solodkova_S} or just Theorem \ref{t:main_many_shifts}).
Hence, using Theorem \ref{t:T(A)_Fp}, we obtain that $|R| \gg |A|^{3/2}$.
Applying (\ref{f:R_D_FF}) and the last inequality, we get
$$
    |A|^{3/2} \ll |R| = |-R \cap(R-1)| \le |\G_* \cap (1-\G_*)| \,.
$$
Using Stepanov's method from \cite{K_Tula} or just the case $k=1$ of Theorem \ref{t:main_many_shifts}, we obtain
$$
    |A|^{3/2} \ll |\G_* \cap (1-\G_*)| \le |\G \cap (1-\G)| + 2 \ll |\G|^{2/3}
$$
as required.

Now let us suppose that $|\G| \ge p^{3/4}$.
Using formula (\ref{f:C_for_subgroups}) of Lemma \ref{l:C_for_subgroups} with parameter $k=1$  and the previous calculations, we get
$$
    |A|^{3/2} \ll |\G_* \cap (1-\G_*)| \le |\G \cap (1-\G)| + 2 \ll |\G|^{2} / p
$$
as required.
Finally, it is known that if $A+B = \xi \G$ or $A+B = \xi \G \bigsqcup \{ 0 \}$, then $|A| \sim |B| \sim |\G|^{1/2+o(1)}$, see \cite{Shparlinski_AD}, \cite{sv}, \cite{Solodkova_S}.
Thus, we have $\xi \G \bigsqcup \{ 0 \} \neq A-A$ for sufficiently large $\G$.
This completes  the proof.
$\hfill\Box$
\end{proof}

\section{The case $D/D$}
\label{sec:refined}

    The bound (\ref{f:DD'}) as well as a similar estimate (\ref{f:DD_Fp}) can be improved in the case of the quotient set $D/D$.
    We thank Misha Rudnev and Oliver Roche--Newton who pointed out an idea for how to do this.

    First of all, let us generalize our main identity $R[A]= 1-R[A]$ as well as Theorem \ref{t:R_size}.
    Let $A \subset \R$ be a subset with $|A|>1$ and let $D:=A-A$.
    Also, let $X\subseteq D \setminus \{ 0\}$ be an arbitrary subset such that $X=-X$.
    Let
$$
    R_X [A] = \left\{ \frac{a_2-a}{a_1-a} ~:~ a,a_1,a_2 \in A,\, a_1 - a \in X \right\} \,.
$$
    Then
\begin{equation}\label{f:R_main_X}
    R_X [A] = 1 - R_X [A] \,.
\end{equation}
Note  that $R_X [A] \subseteq D/X$.
Finally, let
$$
    \sigma_X (A) := \sum_{x\in X} |A \cap (A+x)| \,.
$$

Let us derive a consequence of Theorem \ref{t:R_Minkovski}.

\begin{theorem}
    Let $A\subset \R$ be finite and let $X\subseteq (A-A) \setminus \{0\}$ be a set with $X=-X$.
    Then
\begin{equation}\label{f:R_size_X}
    |R_X [A]| \gg \frac{\sigma^2_X (A)}{|A|^{2} \log |A|} \,.
\end{equation}
\label{t:R_size_X}
\end{theorem}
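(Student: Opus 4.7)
\medskip

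\textbf{Proof proposal for Theorem \ref{t:R_size_X}.} The plan is to mimic the standard Cauchy--Schwarz derivation of Theorem \ref{t:R_size}, but with the triples $(a, a_1, a_2)$ restricted by the constraint $a_1 - a \in X$, and then to invoke the Jones--Roche-Newton bound $\T(A) \ll |A|^4 \log |A|$ from Theorem \ref{t:R_Minkovski}.

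Concretely, for each $r \in \R$ let
$$
    n(r) := \bigl| \{ (a, a_1, a_2) \in A^3 ~:~ a_1 - a \in X,\; (a_2-a)/(a_1-a) = r \} \bigr|.
$$
Since $0 \notin X$, the fraction is well-defined, and $n(r)$ is supported on $R_X[A]$. The first step is to compute the first moment: summing over $r$ and unpacking the fibres, one gets
$$
    \sum_r n(r) = |A| \cdot \bigl|\{(a,a_1) \in A^2 ~:~ a_1 - a \in X\}\bigr| = |A| \sum_{x \in X} |A \cap (A+x)| = |A|\, \sigma_X(A),
$$
using $X = -X$ in the last equality.

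The second step is to bound the second moment $\sum_r n(r)^2$ from above by $\T(A)$. Indeed, $\sum_r n(r)^2$ counts sextuples $(a,a_1,a_2,a',a'_1,a'_2) \in A^6$ (with both $a_1-a$, $a'_1-a' \in X$) satisfying $(a_2-a)(a'_1-a') = (a'_2-a')(a_1-a)$, and after the substitution $c := a$, $d := a'$, $b := a'_1$, $b' := a_1$ this becomes exactly the defining equation of $\T(A,A,A,A) = \T(A)$. Dropping the constraint $a_1-a,a'_1-a' \in X$ can only enlarge the count, so $\sum_r n(r)^2 \le \T(A) \ll |A|^4 \log |A|$ by Theorem \ref{t:R_Minkovski}.

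Finally, Cauchy--Schwarz gives
$$
    |R_X[A]| \;\ge\; \frac{\bigl(\sum_r n(r)\bigr)^2}{\sum_r n(r)^2} \;\gg\; \frac{|A|^2 \sigma_X(A)^2}{|A|^4 \log |A|} \;=\; \frac{\sigma_X(A)^2}{|A|^2 \log |A|},
$$
which is the claimed bound. No step looks genuinely hard: the only thing to be careful about is the first-moment computation, where the hypothesis $X = -X$ is precisely what ensures that the count of pairs $(a,a_1) \in A^2$ with $a_1 - a \in X$ equals $\sigma_X(A)$ rather than some one-sided variant. Everything else is a direct transcription of the argument that produces Theorem \ref{t:R_size} from Theorem \ref{t:R_Minkovski}.
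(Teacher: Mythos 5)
Your proof is correct and is essentially the paper's own argument: both compute the first moment $|A|\,\sigma_X(A)$ of the fibre-counting function (the paper writes it as $|A|\sigma_X(A)=\sum_{\la\in R_X[A]}\sum_{x\in X}|A\cap(A+x)\cap(A+\la x)|$, which is your $\sum_r n(r)$ in different notation), bound the second moment by $\T(A)$, and finish with Cauchy--Schwarz plus Theorem \ref{t:R_Minkovski}. One cosmetic remark: your stated substitution ($c:=a$, $d:=a'$, $b:=a'_1$, $b':=a_1$) is incomplete and, as most naturally completed, gives $(a_2-a)(a'_1-a')=(a'_2-a)(a_1-a')$ rather than your equation; the correct relabeling is $(a,a',b,b',c,d)=(a_2,a_1,a'_1,a'_2,a,a')$, under which the identification of the unrestricted sextuple count with $\T(A)$ is indeed exact.
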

\begin{proof}
    We have
$$
    |A| \sigma_X (A) = \sum_{x\in X} \sum_y |A\cap (A+x) \cap (A+y)|
        = \sum_{\la\in R_X[A]}\, \sum_{x\in X} |A\cap (A+x) \cap (A+\la x)| \,.
$$
    Using the Cauchy--Schwarz inequality, we get
\begin{equation}\label{f:sigma_T}
    |A|^2 \sigma^2_X (A)
        \le
            |R_X [A]| \sum_{\la} \left( \sum_x |A\cap (A+x) \cap (A+\la x)| \right)^2
                =
                    |R_X [A]| \T (A) \,.
\end{equation}
Applying Theorem \ref{t:R_Minkovski}, we obtain the result.
This completes  the proof.
$\hfill\Box$
\end{proof}

\bigskip

Note  that we do not use the fact that $A$ is a subset of the reals to get formula (\ref{f:sigma_T}), but just that $A$ belongs to some field.

\bigskip

Now we obtain the main result of this  section.

\begin{theorem}
    Let $A\subset \R$ be a finite set and let $D=A-A$.
    Then
\begin{equation}\label{f:D/D}
    |D/D| \gg
            |D|^{\frac{3}{4}} |A|^{\frac{3}{4}} \log^{-\frac{5}{8}} |A|
                \gg
                    |D|^{1+\frac{1}{8}} \log^{-\frac{5}{8}} |D| \,.
\end{equation}
    In particular, if $|(A- A) / (A-A)| \ll |A|^2$, then $|A-A| \ll |A|^{2-\frac{1}{3}} \log^{\frac{5}{6}} |A|.$
\label{t:D/D}
\end{theorem}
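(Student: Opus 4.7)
The plan is to refine the proof of Theorem \ref{t:DD} by replacing the full difference set $D = A - A$ with a well-chosen popular level set $X \subseteq D$, so that the finer lower bound on $|R_X|$ supplied by Theorem \ref{t:R_size_X} becomes available. The proof will proceed in three main steps.

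First, a standard dyadic pigeonhole applied to $\sum_{x \in D \setminus \{0\}} |A \cap (A+x)| = |A|^2 - |A|$ produces a threshold $\tau \ge 1$ and a symmetric subset $X \subseteq D \setminus \{0\}$ (with $X = -X$) such that $|A \cap (A+x)| \asymp \tau$ for every $x \in X$ and $\sigma_X(A) = |X|\tau \gg |A|^2/\log|A|$. Theorem \ref{t:R_size_X} then yields the refined lower bound
$$|R_X[A]| \gg \frac{|X|^2 \tau^2}{|A|^2 \log|A|}.$$

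Second, I would use the identity $R_X = 1 - R_X$ from (\ref{f:R_main_X}) and apply Lemma \ref{l:2/3} to the triple $(-R_X, R_X, -1)$, with an asymmetric choice of the auxiliary parameters $C, D'$ tailored to involve $X$ (for instance $C = X$ and $D' = D$). Combining this with the inclusions $R_X \subseteq D/X \subseteq D/D$ and the multiplicative Pl\"{u}nnecke--Ruzsa inequality produces upper bounds of the shape $|R_X \cdot X|, |R_X \cdot D| \le |DD/D| \le |D/D|^3/|D|^2$, and hence an inequality controlling $|R_X|$ in terms of $|D/D|$, $|D|$, and $|X|$.

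Third, substituting the lower bound on $|R_X|$ from the first step and optimizing over $|X|$ and $\tau$ (subject to $|X|\tau \gg |A|^2/\log|A|$, $|X| \le |D|$, $\tau \le |A|$) should yield $|D/D|^4 \gg |D|^3 |A|^3/\log^{5/2}|A|$, giving the first inequality of (\ref{f:D/D}); the second inequality then follows from the trivial estimate $|A| \ge |D|^{1/2}$. For the ``in particular'' claim, substituting $|D/D| \ll |A|^2$ into the main bound and solving for $|D|$ gives $|A-A| = |D| \ll |A|^{5/3}\log^{5/6}|A|$, as required.

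The principal technical obstacle lies in the second step: the naive choice $C = D' = D$ merely reproduces the original bound of Theorem \ref{t:DD}, so the refined selection of $C$ and $D'$ must genuinely exploit the level-set structure of $X$ (its parameter $\tau$, not simply its size). This is likely to require combining Lemma \ref{l:2/3} with Ruzsa's triangle inequality applied to a triple such as $(DD, X, D)$, in order to extract a sufficiently tight estimate for $|R_X \cdot X|$ that converts the original $|D|^{5/6} |A|^{1/2}$ exponents into the sharpened $|D|^{3/4} |A|^{3/4}$.
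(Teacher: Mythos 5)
Your steps 1 and 3 do coincide with the paper's argument: the dyadic popularity decomposition, the lower bound $|R_X[A]| \gg \sigma_X^2(A)/(|A|^2\log|A|)$ from Theorem \ref{t:R_size_X}, the final optimization under $\Delta|X|\gg |A|^2/\log|A|$, $\Delta\gg|A|^2/|D|$, and the deduction of the ``in particular'' statement are all as in the paper. But the step you yourself flag as the ``principal technical obstacle'' is exactly the step you have not supplied, and the substitutes you propose would fail. Any route in which $R_X\cdot D$ (or $R_X\cdot X$) is ultimately bounded through $|DD/D| \le |D/D|^3/|D|^2$ cannot beat Theorem \ref{t:DD}: nothing prevents the pigeonhole from outputting the regime $\tau \approx |A|^2/|D|$, $|X|\approx |D|$, in which $X$ is essentially all of $D$ and $R_X[A]$ essentially $R[A]$, so the computation collapses to that of Theorem \ref{t:DD} and yields only $|D/D|\gg |D|^{5/6}|A|^{1/2}$ up to logarithms, which is strictly weaker than $|D|^{3/4}|A|^{3/4}$ since $|A|\ge |D|^{1/2}$. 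The same degeneration occurs for your asymmetric choice $C=X$, $D'=D$ in Lemma \ref{l:2/3}, and a Ruzsa triangle inequality applied to a triple like $(DD,X,D)$ still leaves three factors of $|D/D|$; the loss sits in the cubic dependence on $|D/D|$, not in the incidence step, so no re-choice of the auxiliary sets in Lemma \ref{l:2/3} repairs it.

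The paper's actual device, which is the idea missing from your sketch, is different: keep both auxiliary sets in Lemma \ref{l:2/3} equal to $A-A$, so that (via $R_X\subseteq D/X$) one has $|R_X[A]| \ll |D|^{-2/3}|DD/X|^{4/3}$, and instead improve the bound on $|DD/X|$ itself using the refined Pl\"unnecke--Ruzsa inequality (\ref{f:Plunnecke_X}). Applied multiplicatively with $\delta=1/2$ to the popular level set $D'_j$ and $B=D^{-1}$, it produces a subset $X\subseteq D'_j$ with $|X|\ge |D'_j|/2$ (then symmetrized to $X\cup(-X)$ so that identity (\ref{f:R_main_X}) is available) satisfying $|DD/X| \ll |D/D|^2|X|/|D|^2$. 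The crucial gain is that the dependence on $|D/D|$ is now quadratic rather than cubic, at the harmless cost of the factor $|X|$; feeding this into the chain together with $\sigma_X(A)\ge \Delta|X|/2$ is precisely what converts the exponents $(5/6,1/2)$ of Theorem \ref{t:DD} into $(3/4,3/4)$. Without this mechanism (or some equivalent way of removing one factor of $|D/D|$), your plan stalls at the bound of Theorem \ref{t:DD}, so as written the proposal does not prove (\ref{f:D/D}).
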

\begin{proof}
Put $L = \log |A|$.
Let
$$
    D' = \{ x \in D ~:~ |A\cap (A+x)| > |A|^2 / (2|D|) \} \,.
$$
We have $\sigma_{D'} (A) \ge |A|^2 /2$.
Now let
$$
    D'_j = \{ x \in D' ~:~ |A|^2 / (2|D|) \cdot 2^{j-1} < |A\cap (A+x)| \le |A|^2 / (2|D|) \cdot 2^{j} \} \,,
$$
where $j\ge 1$.
By the pigeonhole principle there exists $j \ll L$ such that
\begin{equation}\label{tmp:13.02.2016_1-}
    \sigma_{D'_j} (A) \gg |A|^2/L \,.
\end{equation}
Set $\D = |A|^2 / (2|D|) \cdot 2^{j}$ and redefine $D'$
to be the set
$D'_j$.
Further, instead of using the Pl\"{u}nnecke--Ruzsa inequality (\ref{f:Plunnecke}), choose $\d =1/2$ and apply estimate (\ref{f:Plunnecke_X}) in its multiplicative form with $A=D'$ and $B=D^{-1}$.
    It gives us a set $X\subseteq A$ with $|X| \ge |D'|/2$ such that
\begin{equation}\label{tmp:13.02.2016_1}
    |XBB| = |X/(DD)| = |DD/X| \ll |D/D|^2 |X|/|D|^2 \,.
\end{equation}
    Considering the set $X \cup (-X)$, we see that the last bound holds for this new set with possibly  bigger constants.
    Redefining $X$ to be $X \cup (-X)$ we conclude  that the bound holds for a symmetric set.
    After that, using  the arguments of the proof of Theorem \ref{t:DD}, identity  (\ref{f:R_main_X}), and
     Lemma \ref{l:2/3} with $A=-R_X [A]$, $B=R_X [A]$, $C=D=A-A$ and $\a=-1$,
    we get
    $$
        |R_X [A]| = |R_X[A] \cap (1-R_X [A])| \ll |D|^{-2/3} |R_X [A] \cdot D|^{4/3} \le |D|^{-2/3} |DD/X|^{4/3} \,.
    $$
    Substituting the bound (\ref{tmp:13.02.2016_1}) as well as estimate (\ref{f:R_size_X}) of Theorem \ref{t:R_size_X}, we obtain
\begin{equation}\label{tmp:13.02.2016_3-}
    \frac{\sigma^{3/2}_X (A) |D|^{1/2}}{|A|^{3/2} L^{3/4}} \ll |R_X [A]|^{3/4} |D|^{1/2} \ll  |DD/X|
        \ll
            \frac{|D/D|^2 |X|}{|D|^2} \,.
\end{equation}
We have $\sigma_X (A) \ge \D |X| 2^{-1}$ and hence
\begin{equation}\label{tmp:13.02.2016_3}
    |D/D|^2 \gg |D|^{5/2} \frac{|X|^{1/2} \D^{3/2}}{|A|^{3/2} L^{3/4}} \,.
\end{equation}
In view of inequality (\ref{tmp:13.02.2016_1-}) and the bound $|X| \ge |D'|/2$, we know that
$$
    \D |X| \ge \sigma_X (A) \gg \sigma_{D'} (A) \gg |A|^2/L \,.
$$
Substituting the last estimate into (\ref{tmp:13.02.2016_3}) and recalling that  $\D \gg |A|^2 /|D|$, we obtain, finally,
$$
    |D/D|^2 \gg \frac{|D|^{5/2} \D}{|A|^{1/2} L^{5/4}} \gg \frac{|D|^{3/2} |A|^{3/2}}{L^{5/4}} \,.
$$
Thus, we have
$$
    |D/D| \gg \frac{|A|^{3/4} |D|^{3/4}}{L^{5/8}} \ge \frac{|D|^{9/8}}{L^{5/8}}
$$
as required.
This completes  the proof.
$\hfill\Box$
\end{proof}

\bigskip

The same arguments, combined with the method of proof of Theorem \ref{t:DD_Fp}, give us the following theorem.

\begin{theorem}
    Let $A\subset \F_p$ be a finite set, $D=A-A$.
    Suppose that $|A| \le cp^{10/27} \log^{4/9} |A|$, where $c>0$ is an absolute constant.
    Then
\begin{equation}\label{f:D/D_Fp}
    |D/D|
        \gg
            |D|^{\frac{11}{16}} |A|^{\frac{9}{16}} \log^{-\frac{1}{4}} |A| \,.
\end{equation}
\label{t:D/D_Fp}
\end{theorem}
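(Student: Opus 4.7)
The plan is to transplant the proof of Theorem \ref{t:D/D} into $\F_p$, with Lemma \ref{l:2/3_Fp} and Theorem \ref{t:T(A)_Fp} replacing Lemma \ref{l:2/3} and Theorem \ref{t:R_Minkovski}. Both replacements carry the restriction that relevant sets have size below $p^{2/3}$, which will be handled by a case analysis on $|D|$. As a preliminary step I would prove an $\F_p$ analog of Theorem \ref{t:R_size_X}: the Cauchy--Schwarz derivation of (\ref{f:sigma_T}) is field-agnostic, so $|A|^2 \sigma^2_X(A) \le |R_X[A]| \cdot \T(A)$ still holds, and Theorem \ref{t:T(A)_Fp} then yields the logarithm-free bound $|R_X[A]| \gg \sigma^2_X(A)/|A|^{5/2}$ (the hypothesis $|A| \le c p^{10/27}\log^{4/9}|A|$ comfortably guarantees $|A|<p^{2/3}$).

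Assuming first $|D| < p^{2/3}$, the rest of the argument transcribes from the real case with only numerology changes. Writing $L = \log|A|$, the dyadic pigeonhole over $D' = \{x \in D : |A \cap (A+x)| > |A|^2/(2|D|)\}$ produces, after symmetrization, a set $X \subseteq D$ at a popular level $\Delta \gg |A|^2/|D|$ with $\sigma_X(A) \gg |A|^2/L$; the multiplicative Pl\"unnecke--Ruzsa inequality with $\delta = 1/2$ gives $|DD/X| \ll |D/D|^2 |X|/|D|^2$; and the identity $R_X[A] = 1 - R_X[A]$ together with the second form of Lemma \ref{l:2/3_Fp} (applied with $A \mapsto -R_X[A]$, $B \mapsto R_X[A]$, $C = D = A-A$, $\alpha = -1$, which is legal precisely because $|A-A| < p^{2/3}$) yields $|R_X[A]| \ll |D|^{-1/2}\, |DD/X|^{4/3}$. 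Chaining the three estimates with $\sigma_X(A) \ge \Delta|X|/2$ and $\Delta \gg |A|^2/|D|$ gives
\[
    |D/D|^8 \gg |A|^{9/2} |D|^{11/2}/L^2,
\]
which is (\ref{f:D/D_Fp}).

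For the complementary case $|D| \ge p^{2/3}$ I would argue by contradiction: if (\ref{f:D/D_Fp}) failed, then the trivial lower bound $|D/D| \ge |D|$ (take any nonzero $d_0 \in D$ and observe $D/d_0 \subseteq D/D$) combined with the hypothesized failure $|D/D| \ll |D|^{11/16}|A|^{9/16}/L^{1/4}$ would force $|D| \ll |A|^{9/5}/L^{4/5}$; combining with $|D| \ge p^{2/3}$ produces $|A| \gg p^{10/27} L^{4/9}$, contradicting the hypothesis $|A| \le c p^{10/27}\log^{4/9}|A|$ for sufficiently small $c$. The main obstacle is bookkeeping: one must verify that the loss in passing from $|R_X[A]|\cdot D$ to $DD/X$ (via Pl\"unnecke--Ruzsa) does not introduce spurious logarithmic factors and that the $\F_p$ Szemer\'edi--Trotter threshold $p^{2/3}$ on $|D|$, propagated through the chained inequalities, produces exactly the exponent $10/27$ in the final threshold on $|A|$ — a clean match which is what makes the statement work.
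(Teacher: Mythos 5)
Your proposal is essentially the paper's own proof: the same field-agnostic Cauchy--Schwarz step giving $|R_X[A]| \gg \sigma_X^2(A)/|A|^{5/2}$ via Theorem \ref{t:T(A)_Fp}, the same dyadic popular-difference set $X$ with the multiplicative Pl\"unnecke--Ruzsa bound $|DD/X| \ll |D/D|^2|X|/|D|^2$, the same application of Lemma \ref{l:2/3_Fp} to the identity $R_X[A]=1-R_X[A]$, and the same contradiction argument via $|D/D| \ge |D|$ and $p^{10/27} \le |D|^{5/9} \ll |A|\log^{-4/9}|A|$ when $|D| \ge p^{2/3}$. Your filled-in numerology, $|D/D|^8 \gg |A|^{9/2}|D|^{11/2}\log^{-2}|A|$, is exactly the computation the paper leaves implicit, so the two arguments coincide.
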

\begin{proof}
With the notation of the proof of Theorem \ref{t:D/D},
we obtain an analog of inequality (\ref{tmp:13.02.2016_3-}), namely,
$$
    \frac{|X|^{3/2} \D^{3/2} |D|^{3/8}}{|A|^{15/8}}
    \ll
    \frac{\sigma^{3/2}_X (A) |D|^{3/8}}{|A|^{15/8}} \ll |R_X [A]|^{3/4} |D|^{3/8} \ll  |DD/X|
        \ll
            \frac{|D/D|^2 |X|}{|D|^2}
            \,.
$$
Of course one needs  to apply  Theorem \ref{t:T(A)_Fp} instead of Theorem \ref{t:R_Minkovski} in the proof.
Using the last formula we obtain after  some calculations the result in the case $|D| < p^{2/3}$.
For larger sets we suppose that (\ref{f:D/D_Fp}) does not hold
and obtain that
$$
    |D| \le |DD|, |D/D| \ll |D|^{\frac{11}{16}} |A|^{\frac{9}{16}} \log^{-\frac{1}{4}} |A|
$$
and hence that
$$
    p^{10/27} \le |D|^{5/9} \ll |A| \log^{-\frac{4}{9}} |A| \,.
$$
The last inequality contradicts the assumption that $|A| \ll p^{10/27} \log^{4/9} |A|$.
This completes  the proof.
$\hfill\Box$
\end{proof}

\section{Appendix}
\label{sec:appendix}

We finish our paper by studying  the quantity $D_\Phi (A)$ and obtaining the bound (\ref{f:S_t}).

\begin{definition}
For any function $\Phi : \R^2 \to \R$ and any finite set $A\subset\R$, let

\begin{equation}\label{f:d_r}
    d_{\Phi} (A) =
        \inf_F \min_{C} \,
        \frac{|F(A,C)|^2}{|A| |C|} \,,
\end{equation}
where the infimum is taken over all functions $F$ such that\\
$1)~$ for any given $b$ and $s$
the curves
\begin{equation}\label{f:l_{b,s}}
    l_{b,s} := \{ (x,y) ~:~ s=F(\Phi(b,x),y) \}
\end{equation}
form a pseudo-line system, \\
$2)~$ the number of solutions to the  equation $s=F(\Phi(b,x),y)$ is at least two.\\
$3)~$ the set $C$ in (\ref{f:d_r}) is chosen over all nonempty subsets of $\R$ such that
for every $a\in A$ we have that $|F(\{ a\},C)| = |C|$ and that $|F(\t{A},C)| \ge |\t{A}|$ for every $\t{A} \subseteq A$.\\
If there is no such a function $F$,
then
 the quantity $d_{\Phi} (A)$ is not defined.
\label{def:d_Phi}
\end{definition}

Let us give some examples.

\begin{example}
    Suppose that for any $b\neq b'$
    the function $F(\Phi(b,x),y) - F(\Phi(b',x),y)$ does not depend on $y$ and is a monotone function of $x$.
    Then the curves defined in (\ref{f:l_{b,s}}) form a pseudo-line system.
    For example, if $F(x,y) = x+y$, then it is enough to ensure that $\Phi(b,x) - \Phi(b',x)$ is monotone.
    If $F(x,y) = xy$, then the monotonicity of $\Phi(b,x)/\Phi(b',x)$ ensures that
    the curves defined in (\ref{f:l_{b,s}}) form a pseudo-line system.
\end{example}


\begin{example}
    It is easy to see that
    $$
        D_{+} (A) \le \tilde{d}_{+} (A) \,.
    $$
    Indeed, just put $F(x,y) = f(x)+y$ and check that the curves $l_{b,s} = \{ (x,y) ~:~ s = f(b+x) + y \}$ satisfy all the required properties of Definition \ref{def:d_Phi} (e.g., the difference $F(\Phi(b,x),y) - F(\Phi(b',x),y) = f(b+x) - f(b'+x)$ does not depend on $y$ and is monotone).
\label{exm:RN}
\end{example}

\bigskip

Let us show  some simple properties of the quantity $d_{\Phi} (A)$.

\begin{lemma}
    Let $A\subset \R$ be a finite set and let $\Phi : \R^2 \to \R$ be a function such that the quantity $d_\Phi (A)$ is defined.
    Then \\
$\circ$ $1\le d_{\Phi} (A) \le |A|$.\\
$\circ$ For any $A'\subseteq A$ one has $d_{\Phi} (A') \le d_{\Phi} (A) \cdot |A|/|A'|$.
\label{l:d_simple}
\end{lemma}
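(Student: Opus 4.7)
The plan is as follows. For the lower bound $d_\Phi(A)\ge 1$, I would pick any admissible pair $(F,C)$ (which exists by hypothesis that $d_\Phi(A)$ is defined) and combine the two halves of condition~3. Fixing any $a_0\in A$, the equality $|F(\{a_0\},C)|=|C|$ immediately forces $|F(A,C)|\ge |F(\{a_0\},C)|=|C|$. Taking $\t A=A$ in the second half of condition~3 gives $|F(A,C)|\ge |A|$. Multiplying these two lower bounds yields $|F(A,C)|^2\ge |A|\cdot|C|$, and since the pair was arbitrary, dividing and taking the infimum gives $d_\Phi(A)\ge 1$.

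For the upper bound $d_\Phi(A)\le |A|$, I would exhibit a single admissible witness. The natural candidate is to take a valid $F$ together with a singleton $C=\{c\}$ chosen so that $F(\cdot,c)$ is injective on $A$; condition 1 is about the curves $l_{b,s}$ and does not involve $C$, condition 3 reduces to $|F(\t A,\{c\})|\ge |\t A|$ for all $\t A\subseteq A$, which is precisely the injectivity of $F(\cdot,c)$ on $A$, and then $|F(A,\{c\})|=|A|$ gives $|F(A,C)|^2/(|A||C|)=|A|$. Extracting such a $c$ from a given admissible pair (or from the structure of $\Phi$) is where a small amount of care is needed; for typical $F$ of the form considered in the examples ($F(x,y)=f(x)+y$ or $F(x,y)=f(x)\cdot y$ with $f$ injective) the choice of a suitable singleton $c$ is immediate.

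For the second bullet, I would argue by monotonicity. Fix $\eps>0$ and choose $F,C$ admissible for $A$ with $|F(A,C)|^2\le (d_\Phi(A)+\eps)\,|A|\,|C|$. Since $A'\subseteq A$, the same pair $(F,C)$ is admissible for $A'$: conditions 1 and 2 depend only on $F$ and $\Phi$, and condition 3 for $A'$ is a restriction of condition~3 for $A$ to subsets $\t A\subseteq A'\subseteq A$. The inclusion $A'\subseteq A$ also gives $|F(A',C)|\le |F(A,C)|$, so
\[
d_\Phi(A')\;\le\;\frac{|F(A',C)|^2}{|A'|\,|C|}\;\le\;\frac{|F(A,C)|^2}{|A'|\,|C|}\;=\;\frac{|F(A,C)|^2}{|A|\,|C|}\cdot\frac{|A|}{|A'|}\;\le\;(d_\Phi(A)+\eps)\cdot\frac{|A|}{|A'|}.
\]
Letting $\eps\to 0$ yields the claimed inequality.

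The only real obstacle is the upper bound $d_\Phi(A)\le |A|$, because it is the one place where one has to produce, rather than restrict or manipulate, an admissible pair; the other two parts reduce to inclusion/monotonicity. I would expect the author to handle this point by exploiting the pseudo-line hypothesis in condition~1 together with a pigeonhole-style choice of a ``column'' $c$ in a given admissible $C$ for which $F(\cdot,c)$ remains injective on $A$.
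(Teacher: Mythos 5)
Your proposal matches the paper's proof essentially step for step: the lower bound combines $|F(A,C)|\ge |C|$ (from $|F(\{a\},C)|=|C|$) with $|F(A,C)|\ge |A|$ (taking $\t{A}=A$), the upper bound takes a one-element $C$ with $|F(A,C)|\le |A|$, and the second bullet uses the same near-optimal pair $(F,C)$, its admissibility for $A'\subseteq A$, the inclusion $|F(A',C)|\le |F(A,C)|$, and the $\eps\to 0$ limit. The one point you flag as needing care, namely that a singleton $C=\{c\}$ must itself be admissible (i.e.\ $F(\cdot,c)$ injective on $A$), is in fact passed over silently in the paper, which simply "takes $C$ equal to a one-element set," so your treatment is, if anything, slightly more scrupulous than the original.
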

\begin{proof}
    By the assumption, for any $C$ in the  infimum from (\ref{f:d_r})
    we have that
    $|F(A,C)| \ge \max\{ |A|, |C|\}$.
    Thus,
    $\frac{|F(A,C)|^2}{|A| |C|} \ge 1$, and hence $d_{\Phi} (A) \ge 1$.
    Taking $C$ equal to a one--element set and applying a
    trivial bound
    $|F(A,C)|\le |A|$, we
    obtain that
    $d_{\Phi} (A) \le |A|$.

    Let us prove the second part
    of the lemma.
    Take a set $C$ and a function $F$ such that $d_\Phi (A) \ge \frac{|F(A,C)|^2}{|A||C|} -\eps$,
    where $\eps>0$ is  arbitrary,
    such that
    for any element $a\in A$ we have that $|F(\{ a\},C)| \ge |C|$ and that for all $\t{A} \subseteq A$ the inequality
    $|F(\t{A},C)| \ge |\t{A}|$ holds.
    Clearly, for an arbitrary $a\in A'$ we have again $|F(\{ a\},C)| \ge |C|$ and for all $\t{A} \subseteq A'$ we have the inequality 
    $|F(\t{A},C)| \ge |\t{A}|$.
    Using the trivial inequality $|F(A',C)| \le |F(A,C)|$, we obtain
$$
    d_\Phi (A') \le \frac{|F(A',C)|^2}{|A'||C|} \le \frac{|F(A,C)|^2}{|A'||C|} \le  (d_{\Phi} (A) + \eps) \cdot |A|/|A'|
$$
    as required.
$\hfill\Box$
\end{proof}

\bigskip

If $A=\{ 0 \}$, $F(x,y) = xy$, and $\Phi(x,y) = xy$, say, then it is easy to see that $d_\Phi (A) = 0$.
Thus, we need the property
$|F(\{ a \},C)| \ge |C|$ for every $a \in A$
to obtain $d_{\Phi} (A) \ge 1$.

\bigskip


\bigskip

Now we can prove the main technical statement of this  section.

\begin{proposition}
    Let $A, B\subset \R$ be finite sets and let $\Phi : \R^2 \to \R$ be any function
    such that for any fixed $z$ and $a\in A$ we have the inequality
    $|\{ b\in B ~:~ \Phi (b,z) = a \}| \le M$, where $M>0$ is an absolute constant.
    Then for
    every real number $\tau \ge 1$ one has
\begin{equation}\label{f:F_Phi}
    |\{ x ~:~ | \{ (a,b) \in A \times B,\, a =\Phi(b,x) \} | \ge \tau \}| \ll_M \frac{d_\Phi (A) |A| |B|^2}{\tau^3} \,.
\end{equation}
    In other words, $A$ has SzT$_\Phi$--type with $O_M (d_\Phi (A))$.
\label{p:F_Phi}
\end{proposition}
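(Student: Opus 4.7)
The plan is to convert the question about $|S_\tau| := |\{x : \alpha^{\Phi}_{A,B}(x) \ge \tau\}|$ into a Szemerédi--Trotter incidence count, using the pseudo-line system supplied by Definition \ref{def:d_Phi}. First I would fix $\eps > 0$ and select a function $F$ and a finite set $C \subset \R$ that nearly realize the infimum defining $d_\Phi(A)$, so that $|F(A,C)|^2/(|A||C|) \le d_\Phi(A) + \eps$ while all three conditions 1)--3) of Definition \ref{def:d_Phi} are satisfied.

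Second, I would set up the configuration $\mathcal{P} := S_\tau \times C$ and $\mathcal{L} := \{l_{b,s} : b \in B,\, s \in F(A,C)\}$, with $|\mathcal{P}| = |S_\tau||C|$ and $|\mathcal{L}| \le |B||F(A,C)|$. Condition 1) guarantees that $\mathcal{L}$ is a pseudo-line system, so Theorem \ref{t:SzT} applies. To lower bound the incidences, observe that for every $x \in S_\tau$, every $c \in C$, and every pair $(a,b) \in A \times B$ with $a = \Phi(b,x)$, the identity $F(\Phi(b,x), c) = F(a,c) \in F(A,C)$ shows that $(x,c)$ lies on the curve $l_{b, F(a,c)}$. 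Distinct values of $b$ produce distinct labels $(b,\cdot)$, while the hypothesis $|\{b \in B : \Phi(b,z) = a\}| \le M$ ensures that at most $M$ such labels can correspond to the same pseudo-line, giving
\[
\mathcal{I}(\mathcal{P}, \mathcal{L}) \gg_M \tau |S_\tau||C|.
\]

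Third, I would invoke Theorem \ref{t:SzT} to obtain
\[
\tau |S_\tau||C| \ll_M (|S_\tau||C|)^{2/3}(|B||F(A,C)|)^{2/3} + |S_\tau||C| + |B||F(A,C)|,
\]
and extract the main term, using $|F(A,C)|^2 \le (d_\Phi(A)+\eps)|A||C|$, to deduce $|S_\tau| \ll_M d_\Phi(A)|A||B|^2/\tau^3$, which is the desired conclusion upon letting $\eps \to 0$.

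The main obstacle is disposing of the two lower-order terms in Szemerédi--Trotter. The term $|\mathcal{P}|$ merely forces $\tau \ll_M 1$, so the claimed bound is trivial in that regime after enlarging the implicit constant. The term $|\mathcal{L}|$ is more delicate: it yields $|S_\tau| \ll_M |B||F(A,C)|/(\tau|C|)$, which must be reconciled with the target $d_\Phi(A)|A||B|^2/\tau^3$; this reduces to checking $\tau^2 \ll |F(A,C)||B|$, which follows from $|F(A,C)| \ge \max(|A|,|C|)$ built into condition 3) combined with the trivial ceiling $\tau \le |B|$, possibly after enlarging $C$ within the infimum so that the main term is the one that dominates. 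Managing this case analysis cleanly, and absorbing the multiplicity factor $M$ into the implicit constants without damaging the exponent on $\tau$, is where the bookkeeping of the proof really lies.
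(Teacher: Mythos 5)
Your setup is essentially the paper's own: choose $(F,C)$ nearly attaining the infimum defining $d_\Phi(A)$, note that for $x\in S_\tau$ and $c\in C$ the point $(x,c)$ lies on at least $\tau$ of the pseudo-lines $l_{b,s}$ with $b\in B$, $s\in F(A,C)$, and apply Theorem \ref{t:SzT}; the paper phrases this via the bound on $\tau$-rich points, $|\mathcal{P}_\tau|\ll |\mathcal{L}|^2/\tau^3+|\mathcal{L}|/\tau$, rather than via an incidence count with $\mathcal{P}=S_\tau\times C$, but that is the same argument. However, your cleanup of the lower-order terms has two concrete flaws. First, the hypothesis $|\{b\in B: \Phi(b,z)=a\}|\le M$ is not what you need for the incidence lower bound: for fixed $x$ each $b$ determines at most one $a=\Phi(b,x)$, so distinct admissible $b$ already give distinct members $l_{b,F(a,c)}$ of the family and $\mathcal{I}(\mathcal{P},\mathcal{L})\ge \tau|S_\tau||C|$ outright. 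Where the hypothesis is genuinely needed --- and where your stated justification fails --- is in killing the $|\mathcal{L}|$ term: you claim $\tau^2\ll |F(A,C)||B|$ follows from $\tau\le |B|$ together with $|F(A,C)|\ge\max(|A|,|C|)$, which is a non sequitur ($\tau\le|B|$ only gives $\tau^2\le|B|^2$, and $|B|$ need not be comparable to $|F(A,C)|$). The correct route, as in the paper, is that the multiplicity hypothesis yields $\tau\le\min\{|B|,M|A|\}$, hence $\tau^2\le M|A||B|\le M|F(A,C)||B|$; ``enlarging $C$ within the infimum'' is not a substitute and plays no role in the paper.

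Second, the case in which the term $|\mathcal{P}|=|S_\tau||C|$ dominates is not ``trivial'': concluding $\tau\ll 1$ gives no bound on $|S_\tau|$ by itself, and there is no a priori estimate $|S_\tau|\ll_M d_\Phi(A)|A||B|^2$, because the hypothesis controls the fibres of $\Phi(\cdot,z)$ in the variable $b$, not the fibres of $\Phi(b,\cdot)$ in $x$. The standard repair, implicit in the paper's use of the rich-point bound, is that for $\tau\ge 2$ every point of $S_\tau\times C$ is an intersection point of two members of the pseudo-line family, so $|S_\tau||C|\le |\mathcal{L}|^2=|B|^2|F(A,C)|^2$, and dividing by $|C|$ and using $|F(A,C)|^2\le (d_\Phi(A)+\eps)|A||C|$ already gives the claimed estimate in the range $\tau=O(1)$. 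With these two repairs your argument coincides with the paper's proof.
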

\begin{proof}
First of all note  that we can assume that $\tau \le |B|$, since otherwise the left--hand side of inequality (\ref{f:F_Phi})
is equal to $0$.
Further, by hypothesis, for any fixed $z$ and any $a\in A$ one has $|\{ b\in B ~:~ \Phi (b,z) = a \}| \le M$
and hence 
\begin{equation}\label{tmp:14.01.2016_2}
    \tau \le \min \{|B|, M|A| \} \,.
\end{equation}

Now take an arbitrary nonempty set $C \subset \R$ and any $x$ from the set in the left--hand side of (\ref{f:F_Phi}).
Then for every  $c\in C$ one has
\begin{equation}\label{f:F_xy}
    F(\Phi(b,x),c) = F(a,c) := s \in F(A,C) \,,
\end{equation}
where $F$ is any function from the infimum in the definition of the quantity $d_\Phi (A)$.
Consider the curves
$l_{b,s} = \{ (x,y) ~:~ s=F(\Phi(b,x),y) \}$,
$b\in B$, $s\in F(A,C)$.
By assumption, the set $\mathcal{L} = \{ l_{b,s} \}$ forms a pseudo-line family.
In particular, $|\mathcal{L}| = |B||F(A,C)|$.
Also, let $\mathcal{P}$ be the set of all intersection points defined  by the curves.
Identity (\ref{f:F_xy}) tells us that the point $(x,c)$ belongs to at least $\tau$ curves $l_{b,s}$.
Hence,
$(x,c)$ belongs to a narrow family
of points
$\mathcal{P}_\tau$ of intersections of at least $\tau$ curves $l_{b,s}$.
Using the Szemer\'{e}di--Trotter Theorem \ref{t:SzT}, we obtain
$$
    |C| \cdot |\{ x ~:~ | (a,b) \in A \times B,\, a =\Phi(b,x) | \ge \tau \}|
        \le
            |\mathcal{P}_\tau|
                \ll
$$
$$
                \ll
                    \frac{|F(A,C)|^2 |B|^2}{\tau^3} + \frac{|F(A,C)||B|}{\tau}
                        \ll_M
                            \frac{|F(A,C)|^2 |B|^2}{\tau^3} \,.
$$
To derive
the last
inequality
we have used estimate (\ref{tmp:14.01.2016_2}), the bound $|F(A,C)| \ge |A|$ and
the trivial inequality
\begin{equation*}\label{tmp:14.01.2016_1}
    \tau^2 \le (\min \{|B|, M|A| \} )^2 \le |B| \cdot M|F(A,C)| \,.
\end{equation*}
This concludes the proof.
$\hfill\Box$
\end{proof}

\begin{example}
    Let $F(x,y) = x+y$ and let $\Phi(x,y) = y \sin x$.
    Then the lines $l_{b,s} = \{ (x,y) ~:~ s=y+x \sin b \}$ form a pseudo-line system.
    Nevertheless, for any fixed $z$ one has $|\Phi^{-1} (\cdot, z)| = + \infty$ or zero.
    So, the pseudo-line condition does not imply any bounds for the size of preimages of the map $\Phi$.
\end{example}

\begin{remark}
    It is easy to see
    that
    it is enough to check the condition $|\{ b ~:~ \Phi (b,z) = a \}| \le M$, $a\in A$
    just for $z$ belonging to the set from the left--hand side of (\ref{f:F_Phi}).
\end{remark}

\bigskip

Now let us derive some consequences of Proposition \ref{p:F_Phi}.
%
Inequality (\ref{f:convex_d}) in the corollary below was obtained in \cite{SS3}: see Lemma 2.6.
Here we give a more systematic proof.
The
corollary
asserts 
that $D_{+} (A) < 4$ for any convex/concave set $A$.
On the other hand, from Proposition \ref{p:F_Phi} it follows that $D_{+} (A) \le M^2$ for any $A$ with $|AA| \le M|A|$ or $|A/A| \le M|A|$.
This
demonstrates some combinatorial
similarity between convex sets and sets with small product/quotient set.

\begin{corollary}
    Let $A\subset \R$ be a finite convex set.
    Then $D_{+} (A) < 4$.
    Furthermore, if $A'\subseteq A$ is a subset of $A$,
    then  for
    an arbitrary
    set $B \subset \R$ one has
\begin{equation}\label{f:convex_d}
    |A'+B| \gg |A'|^{3/2} |B|^{1/2} |A|^{-1/2} \,.
\end{equation}
\label{c:convex_d}
\end{corollary}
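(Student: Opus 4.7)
My plan is to split the corollary into its two claims and reduce each to machinery developed earlier in the paper. For the bound $D_+(A)<4$, I would invoke Example \ref{exm:RN}, which asserts $D_+(A)\le \tilde d_+(A)$, and exhibit an explicit concave $f$ and set $C$ witnessing $\tilde d_+(A)<4$. Writing $A=\{a_1<a_2<\dots<a_n\}$, convexity of $A$ says the gaps $a_{i+1}-a_i$ are strictly increasing; equivalently, the piecewise-linear map $f$ sending $a_i\mapsto i$ has strictly decreasing slopes $1/(a_{i+1}-a_i)$ and is therefore concave. With this $f$ we have $f(A)=\{1,\dots,n\}$, so taking $C=\{0,1,\dots,n-1\}$ gives $|f(A)+C|=2n-1$ and hence $\tilde d_+(A)\le (2n-1)^2/n^2<4$.

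For the sumset inequality on a subset $A'\subseteq A$, the plan is to push the small value of $d_+$ on $A$ down to $A'$ with a controlled loss. By the second assertion of Lemma \ref{l:d_simple}, $d_+(A')\le d_+(A)\cdot |A|/|A'|$; combined with $d_+(A)\le \tilde d_+(A)<4$ this gives $d_+(A')\ll |A|/|A'|$. Proposition \ref{p:F_Phi} then upgrades this to a genuine SzT--type bound $D_+(A')\ll |A|/|A'|$ (with $M=1$ here, since $b+z=a$ has a unique solution in $b$). Finally, I would apply the lower bound (\ref{f:Phi_cor}) of Corollary \ref{c:Phi_cor} to the pair $(A',-B)$: for $\Phi(x,y)=x+y$, the support of $\alpha_{A',-B}$ is exactly $A'-(-B)=A'+B$, which yields
\begin{equation*}
|A'+B|\gg |A'|\,|B|^{1/2}\,D_+^{-1/2}(A')\gg |A'|\,|B|^{1/2}\,(|A'|/|A|)^{1/2}=|A'|^{3/2}|B|^{1/2}|A|^{-1/2},
\end{equation*}
as claimed.

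The only step requiring a bit of care is verifying the admissibility conditions of Definition \ref{def:d_Phi} for the choice $F(x,y)=f(x)+y$: one needs $|f(\{a\})+C|=|C|$ for every $a\in A$ and $|f(\tilde A)+C|\ge |\tilde A|$ for every $\tilde A\subseteq A$. Both hold because $f$ is strictly monotone on $A$ and $C$ is an interval in $\Z$; the pseudo-line property of the associated curves is automatic for concave $f$ as explained in Example \ref{exm:RN}. Beyond this bookkeeping there is no serious obstacle: the whole proof is an assembly of Example \ref{exm:RN}, Lemma \ref{l:d_simple}, Proposition \ref{p:F_Phi}, and Corollary \ref{c:Phi_cor}, with the convexity of $A$ entering only through the one explicit witness above.
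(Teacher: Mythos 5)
Your argument is correct and follows essentially the same route as the paper: the explicit concave witness $f$ with $f(A)=\{1,\dots,n\}$ and $C$ an interval of $n$ integers gives $\tilde{d}_{+}(A)\le (2n-1)^2/n^2<4$ (the paper takes $f=g^{-1}$ and $C=I$), and the subset bound is exactly the paper's combination of the second part of Lemma \ref{l:d_simple}, Proposition \ref{p:F_Phi} and Corollary \ref{c:Phi_cor}. The only cosmetic caveat is that with a piecewise-linear $f$ the difference $f(b+x)-f(b'+x)$ can be locally constant, so to guarantee the pseudo-line property used in Example \ref{exm:RN} you should interpolate the points $(a_i,i)$ by a strictly concave function (possible since the chord slopes $1/(a_{i+1}-a_i)$ are strictly decreasing), which changes nothing else in the argument.
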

\begin{proof}
By assumption, $A=g(I)$, where $I = \{1,2, \dots, |A|\}$ and $g$ is a convex function.
In view of Example \ref{exm:RN}, we have $D_{+} (A) \le \tilde{d}_{+} (A)$.
Moreover, substituting $f=g^{-1}$ in formula (\ref{f:RN}) and taking $C=I$, we obtain
$$
    D_{+} (A) \le \tilde{d}_{+} (A) \le \frac{|I+I|^2}{|A||I|} = \frac{(2|I|-1)^2}{|I|^2} < 4 \,.
$$
Inequality (\ref{f:convex_d}) follows from Corollary \ref{c:Phi_cor} and the second part of Lemma \ref{l:d_simple}.
This completes the proof.
$\hfill\Box$
\end{proof}

\bigskip

To derive the bound (\ref{f:S_t}) from Proposition \ref{p:F_Phi},
just put $F(x,y) = f(x)+y$ and consider the curves
$$
    l_{b,s} = \{ (x,y) ~:~ s=f(xb)+y \} \,.
$$
    These curves have infinite cardinality and form a pseudo-line system.
    Clearly, for any nonempty $A$, $C$ such that $0\notin A,C$ one has
    $|F(\{ a\},C)| = |C|$ for every $a\in A$, and $|F(\t{A},C)| \ge |\t{A}|$ holds for an arbitrary $\t{A} \subseteq A$.
    
    Thus, formula (\ref{f:S_t}) follows.


\bibliographystyle{amsplain}


\begin{dajauthors}
\begin{authorinfo}[ilya]
I.D.~Shkredov\\
Steklov Mathematical Institute,\\
ul. Gubkina, 8, Moscow, Russia, 119991\\
\smallskip
and\\
\smallskip

IITP RAS,  \\
Bolshoy Karetny per. 19, Moscow, Russia, 127994\\
\href{mailto:ilya.shkredov@gmail.com}{ilya.shkredov@gmail.com}

\end{authorinfo}
\end{dajauthors}

\end{document}